\newtheorem{theorem}{Theorem}[section]
\newtheorem{defn}[theorem]{Definition}
\newtheorem{lemma}[theorem]{Lemma}
\newtheorem{eple}[theorem]{Example}
\newtheorem{rmk}[theorem]{Remarks}
\newtheorem{dsc}[theorem]{Discussion}
\newtheorem{nota}[theorem]{Notation}
\newsavebox{\indbin}
\savebox{\indbin}{\begin{picture}(0,0)
\newlength{\gnu}
\settowidth{\gnu}{$\smile$} \setlength{\unitlength}{.5\gnu}
\put(-1,-.65){$\smile$} \put(-.25,.1){$|$}
\end{picture}}
\newcommand{\be}{\begin{enumerate}}
\newcommand{\bd}{\begin{defn}}
\newcommand{\bt}{\begin{theorem}}
\newcommand{\bl}{\begin{lemma}}
\newcommand{\ee}{\end{enumerate}}
\newcommand{\ed}{\end{defn}}
\newcommand{\et}{\end{theorem}}
\newcommand{\el}{\end{lemma}}
\begin{document}
\title{Solving the Heat Equation using Nonstandard Analysis}
\author{Tristram de Piro}
\address{Mathematics Department, Harrison Building, Streatham Campus, University of Exeter, North Park Road, Exeter, Devon, EX4 4QF, United Kingdom}
 \email{tdpd201@exeter.ac.uk}
\maketitle
\begin{abstract}
We use the nonstandard Fourier transform method, see \cite{dep1}, along with an established nonstandard approach to ODE's, see \cite{cut} and \cite{dep2}, to find a solution to the heat equation, on $(0,\infty)\times\mathcal{R}$, with a given boundary condition $g$ at $t=0$. We use this result to find an algorithm, converging to a solution of this equation, with applications to derivatives pricing in finance.
\end{abstract}

We adopt the following notation;\\

\begin{defn}
\label{heatdefs}
For $\eta\in{{^{*}{\mathcal N}}\setminus{\mathcal N}}$, we let $(\overline{\mathcal{R}_{\eta}},\mathfrak{C}_{\eta},\lambda_{\eta})$ be as in Definition 0.15 of \cite{dep1}.\\

We let $(\overline{\mathcal{R}_{\eta}},L(\mathfrak{C}_{\eta}),L(\lambda_{\eta}))$ denote the associated Loeb space, see Definition 0.5 of \cite{dep1}.\\

$(\mathcal{R},\mathfrak{B},\mu), (\mathcal{R}^{+-\infty},\mathfrak{B}',\mu')$ are as in Lemma 0.6 of \cite{dep1}.\\

$\overline{\mathcal{T}_{\eta}}=\{\tau\in{^{*}{\mathcal R}_{\geq 0}}:0\leq\tau<\eta\}$ and we again denote by $\mathfrak{C}_{\eta}$, the restriction of $\mathfrak{C}_{\eta}$ to $\overline{\mathcal{T}_{\eta}}$, and $\lambda_{\eta}$ the restriction of the counting measure.\\

$(\overline{\mathcal{T}_{\eta}},L(\mathfrak{C}_{\eta}),L(\lambda_{\eta}))$ is the corresponding Loeb space.\\

$\mathcal{T}=\mathcal{R}_{\geq 0}$ and $(\mathcal{T},\mathfrak{B},\mu), (\mathcal{T}^{+\infty},\mathfrak{B}',\mu')$ are defined analogously to Lemma 0.6 of \cite{dep1}.\\

$(\overline{\mathcal{T}_{\eta}}\times \overline{\mathcal{R}_{\eta}},\mathfrak{C}_{\eta}^{2},\lambda_{\eta}^{2})$ is as in Definition 0.15 of \cite{dep1}.\\

$(\overline{\mathcal{T}_{\eta}}\times \overline{\mathcal{R}_{\eta}},L(\mathfrak{C}_{\eta}^{2}),L(\lambda_{\eta}^{2}))$ is the corresponding Loeb space.\\

$(\overline{\mathcal{T}_{\eta}}\times \overline{\mathcal{R}_{\eta}},L(\mathfrak{C}_{\eta})\times L(\mathfrak{C}_{\eta}),L(\lambda_{\eta})\times L(\lambda_{\eta}))$ is the complete product of the Loeb spaces $(\overline{\mathcal{T}_{\eta}},L(\mathfrak{C}_{\eta}),L(\lambda_{\eta}))$ and $(\overline{\mathcal{R}_{\eta}},L(\mathfrak{C}_{\eta}),L(\lambda_{\eta}))$.\\

Similarly, $(\mathcal{T}^{+\infty}\times \mathcal{R}^{+-\infty},\mathfrak{B}'\times\mathfrak{B}',\mu'\times\mu')$ and
$(\mathcal{T}\times \mathcal{R},\mathfrak{B}\times\mathfrak{B},\mu\times\mu)$ are the complete products of $(\mathcal{T}^{+\infty},\mathfrak{B}',\mu')$, $(\mathcal{R}^{+-\infty},\mathfrak{B}',\mu')$ and $(\mathcal{T},\mathfrak{B},\mu)$, $(\mathcal{R},\mathfrak{B},\mu)$ respectively.\\

We let $({^{*}\mathcal{R}},{^{*}\mathfrak{D}})$ denote the hyperreals, with the transfer of the Borel field $\mathfrak{D}$ on $\mathcal{R}$. A function $f:(\overline{\mathcal{R}_{\eta}},\mathfrak{C}_{\eta})\rightarrow ({^{*}\mathcal{R}},{^{*}\mathfrak{D}})$ is measurable, if $f^{-1}:{^{*}\mathfrak{D}}\rightarrow \mathfrak{C}_{\eta}$. Similarly, $f:(\overline{\mathcal{T}_{\eta}}\times\overline{\mathcal{R}_{\eta}},\mathfrak{C}_{\eta}^{2})\rightarrow ({^{*}\mathcal{R}},{^{*}\mathfrak{D}})$ is measurable, if $f^{-1}:{^{*}\mathfrak{D}}\rightarrow \mathfrak{C}_{\eta}^{2}$. Observe that this is equivalent to the definition given in \cite{Loeb}. We will abbreviate this notation to $f:\overline{\mathcal{R}_{\eta}}\rightarrow{^{*}\mathcal{R}}$ or $f:\overline{\mathcal{T}_{\eta}}\times\overline{\mathcal{R}_{\eta}}\rightarrow{^{*}\mathcal{R}}$ is measurable, $(*)$. The same
applies to $({^{*}\mathcal{C}},{^{*}\mathfrak{D}})$, the hyper complex numbers, with the transfer of the Borel field $\mathfrak{D}$, generated by the complex topology. Observe that $f:\overline{\mathcal{R}_{\eta}}\rightarrow{^{*}\mathcal{C}}$ or $f:\overline{\mathcal{T}_{\eta}}\times\overline{\mathcal{R}_{\eta}}\rightarrow{^{*}\mathcal{C}}$ is measurable, in this sense, iff $Re(f)$ and $Im(f)$ are measurable in the sense of $(*)$.

\end{defn}

We have the following lemma, generalising Theorem 0.7 of \cite{dep1} and Theorem 22 of \cite{and};\\

\begin{lemma}
\label{squaremmp}

The identity;\\

$i:(\overline{\mathcal{T}_{\eta}}\times \overline{\mathcal{R}_{\eta}}, L(\mathfrak{C}_{\eta}^{2}),L(\lambda_{\eta}^{2}))\rightarrow (\overline{\mathcal{T}_{\eta}}\times \overline{\mathcal{R}_{\eta}}, L(\mathfrak{C}_{\eta})\times L(\mathfrak{C}_{\eta}), L(\lambda_{\eta})\times L(\lambda_{\eta}))$\\

and the standard part mapping;\\

$st:(\overline{\mathcal{T}_{\eta}}\times \overline{\mathcal{R}_{\eta}}, L(\mathfrak{C}_{\eta})\times L(\mathfrak{C}_{\eta}), L(\lambda_{\eta})\times L(\lambda_{\eta}))$\\

$\indent \ \ \ \ \ \ \ \ \ \ \ \ \ \ \ \ \ \ \ \ \ \ \ \ \ \ \ \ \rightarrow(\mathcal{T}^{+\infty}\times \mathcal{R}^{+-\infty},\mathfrak{B}'\times\mathfrak{B}',\mu'\times\mu')$\\

are measurable and measure preserving.

 \end{lemma}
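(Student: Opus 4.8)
The plan is to treat the two maps separately, since each rests on a different ingredient: the identity $i$ is governed by the Anderson--Keisler comparison between the Loeb measure of an internal product and the product of the one-dimensional Loeb measures (this is the content generalizing Theorem 22 of \cite{and}), while the standard part map is handled by applying Theorem 0.7 of \cite{dep1} in each coordinate and then invoking a general product principle for measurable, measure-preserving maps.

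For the identity map $i$, I would first record that for internal $A\in\mathfrak{C}_{\eta}$ and $B\in\mathfrak{C}_{\eta}$ the rectangle $A\times B$ lies in $\mathfrak{C}_{\eta}^{2}$ and satisfies $\lambda_{\eta}^{2}(A\times B)=\lambda_{\eta}(A)\,\lambda_{\eta}(B)$, by transfer of the finiteness of the product counting measure. Passing to the Loeb algebras, every set of $L(\mathfrak{C}_{\eta})$ is approximated from inside and outside by internal sets, so each generating rectangle of the completed product $\sigma$-algebra $L(\mathfrak{C}_{\eta})\times L(\mathfrak{C}_{\eta})$ belongs to $L(\mathfrak{C}_{\eta}^{2})$; a monotone class argument, together with the Carath\'eodory/completion description of the Loeb construction, then yields the inclusion $L(\mathfrak{C}_{\eta})\times L(\mathfrak{C}_{\eta})\subseteq L(\mathfrak{C}_{\eta}^{2})$, which is precisely measurability of $i$. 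The matching of the measures on internal rectangles propagates, by uniqueness of the $\sigma$-finite extension and passage to completions, to agreement of $L(\lambda_{\eta}^{2})$ with $L(\lambda_{\eta})\times L(\lambda_{\eta})$ on all of $L(\mathfrak{C}_{\eta})\times L(\mathfrak{C}_{\eta})$; this is the measure-preserving clause for $i$.

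For the standard part map, I would use that it acts coordinatewise, $st(\tau,x)=(st(\tau),st(x))$, and that by Theorem 0.7 of \cite{dep1} each factor map $st:(\overline{\mathcal{T}_{\eta}},L(\mathfrak{C}_{\eta}),L(\lambda_{\eta}))\to(\mathcal{T}^{+\infty},\mathfrak{B}',\mu')$ and $st:(\overline{\mathcal{R}_{\eta}},L(\mathfrak{C}_{\eta}),L(\lambda_{\eta}))\to(\mathcal{R}^{+-\infty},\mathfrak{B}',\mu')$ is already measurable and measure preserving. The general principle is that a product of measurable measure-preserving maps is again such a map between the completed product spaces: measurability follows from $(st\times st)^{-1}(C\times D)=st^{-1}(C)\times st^{-1}(D)$ since measurable rectangles generate, and the measure-preserving property holds on rectangles by multiplicativity and then extends by uniqueness of the $\sigma$-finite product measure, with a routine completion argument absorbing the null sets.

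The main obstacle is the measure-preserving assertion for $i$, namely that $L(\lambda_{\eta}^{2})$ restricted to $L(\mathfrak{C}_{\eta})\times L(\mathfrak{C}_{\eta})$ really coincides with $L(\lambda_{\eta})\times L(\lambda_{\eta})$, bearing in mind that $L(\mathfrak{C}_{\eta}^{2})$ is in general strictly larger than the completed product $\sigma$-algebra, so the Loeb product is a genuine extension rather than an equality of spaces. Care is needed because $\lambda_{\eta}$ is only $\sigma$-finite on the unbounded internal domains $\overline{\mathcal{T}_{\eta}}$ and $\overline{\mathcal{R}_{\eta}}$, so the approximation and extension steps must be carried out on sets of finite measure and then exhausted; this $\sigma$-finite, two-factor adaptation of Anderson's Theorem 22 to the specific spaces of \cite{dep1} is the real work, whereas the standard-part half is essentially formal once Theorem 0.7 is in hand.
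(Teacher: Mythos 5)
Your proposal is correct and follows essentially the same route as the paper: establish $L(\mathfrak{C}_{\eta})\times L(\mathfrak{C}_{\eta})\subseteq L(\mathfrak{C}_{\eta}^{2})$ with agreement of measures by approximating Loeb sets from inside and outside and checking rectangles, then handle $st$ coordinatewise via Theorem 0.7 of \cite{dep1} and extend from measurable rectangles to the complete product. The only cosmetic difference is that where you derive agreement of $L(\lambda_{\eta}^{2})$ with $L(\lambda_{\eta})\times L(\lambda_{\eta})$ on the generated product $\sigma$-algebra from uniqueness of $\sigma$-finite extensions, the paper simply cites Ward Henson's result for that step.
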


\begin{proof}
To show that $i$ is measurable and measure preserving, it is sufficient to prove that;\\

(i). $L(\mathfrak{C}_{\eta})\times L(\mathfrak{C}_{\eta})\subset L(\mathfrak{C}_{\eta}^{2})$.\\

(ii). $L(\lambda_{\eta}^{2})|_{L(\mathfrak{C}_{\eta})\times L(\mathfrak{C}_{\eta})}=
L(\lambda_{\eta})\times L(\lambda_{\eta})$.\\

As in \cite{and}, if $A\in\mathfrak{C}_{\eta}$;\\

$\{M\in\sigma(\mathfrak{C}_{\eta}):M\times A\in\sigma(\mathfrak{C}_{\eta}\times\mathfrak{C}_{\eta})\}$\\

is a $\sigma$-algebra, containing $\mathfrak{C}_{\eta}$, hence, it equals $\sigma(\mathfrak{C}_{\eta})$. Similarly, if $B\in\sigma(\mathfrak{C}_{\eta})$;\\

$\{M\in\sigma(\mathfrak{C}_{\eta}):B\times M\in\sigma(\mathfrak{C}_{\eta}\times\mathfrak{C}_{\eta})\}$\\

is a $\sigma$-algebra, and equals $\sigma(\mathfrak{C}_{\eta})$. Therefore;\\

$\mathfrak{C}_{\eta}\times\mathfrak{C}_{\eta}\subset \sigma(\mathfrak{C}_{\eta})\times\sigma(\mathfrak{C}_{\eta})=\sigma(\mathfrak{C}_{\eta}\times\mathfrak{C}_{\eta})$\\

Now, using Ward Henson's result, see footnote 1 of \cite{dep1}, it follows that $L(\lambda_{\eta}^{2})=L(\lambda_{\eta})\times L(\lambda_{\eta})$ on $\sigma(\mathfrak{C}_{\eta})\times\sigma(\mathfrak{C}_{\eta})$, $(*)$. Now, suppose that $\{C,D\}\subset L(\mathfrak{C}_{\eta})$ then, there exists $\{C_{1},C_{2},D_{1},D_{2}\}\subset\sigma(\mathfrak{C}_{\eta})$, with $C_{1}\subset C\subset C_{2}$, $D_{1}\subset D\subset D_{2}$, $L(\lambda_{\eta})({C_{2}\setminus C_{1}})=0$, $L(\lambda_{\eta})({D_{2}\setminus D_{1}})=0$, $(**)$, and $C_{1}\times D_{1}\subset C\times D\subset C_{2}\times D_{2}$. Moreover, $({C_{2}\times D_{2}\setminus C_{1}\times D_{1}})\subset (({C_{2}\setminus C_{1}})\times D_{2})\cup (C_{2}\times ({D_{2}\setminus D_{1}}))$, $(***)$. By $(*),(**),(***)$, $L(\lambda_{\eta}^{2})({C_{2}\times D_{2}\setminus C_{1}\times D_{1}})=0$. Therefore, $C\times D\in L(\mathfrak{C}_{\eta}^{2})$, and the product $\sigma$-algebra $L(\mathfrak{C}_{\eta})\times L(\mathfrak{C}_{\eta})\subset L(\mathfrak{C}_{\eta}^{2})$, $(\dag)$. Using $(*),(\dag)$, $L(\lambda_{\eta}^{2})$ agrees with $L(\lambda_{\eta})\times L(\lambda_{\eta})$ on this algebra, hence, the complete product $L(\mathfrak{C}_{\eta})\times L(\mathfrak{C}_{\eta})\subset L(\mathfrak{C}_{\eta}^{2})$, showing $(i)$, and $L(\lambda_{\eta}^{2})|_{L(\mathfrak{C}_{\eta})\times L(\mathfrak{C}_{\eta})}=
L(\lambda_{\eta})\times L(\lambda_{\eta})$, by the definition of a completion, showing $(ii)$.\\

We recall the result, Theorem 0.7, of \cite{dep1}, that;\\

$st:(\overline{\mathcal{R}_{\eta}}, L(\mathfrak{C}_{\eta}), L(\lambda_{\eta}))\rightarrow (\mathcal{R}^{+-\infty},\mathfrak{B}',\mu')$\\

is measurable and measure preserving, $(\sharp)$. Similarly, one can show that;\\

$st:(\overline{\mathcal{T}_{\eta}}, L(\mathfrak{C}_{\eta}), L(\lambda_{\eta}))\rightarrow (\mathcal{T}^{+\infty},\mathfrak{B}',\mu')$\\

is measurable and measure preserving, $(\sharp\sharp)$. The rest of the argument is fairly straightforward, if $\{B_{1},B_{2}\}\subset\mathfrak{B}'$, then, using $(\sharp),(\sharp\sharp)$, $st^{-1}(B_{1}\times B_{2})\in L(\mathfrak{C}_{\eta})\times L(\mathfrak{C}_{\eta})$, and $L(\mathfrak{C}_{\eta})\times L(\mathfrak{C}_{\eta})(st^{-1}(B_{1}\times B_{2}))=\mu'\times\mu'(B_{1}\times B_{2})$. It follows, using the usual argument, as in the first part of the proof, that the push forward measure $st_{*}(L(\mathfrak{C}_{\eta})\times L(\mathfrak{C}_{\eta}))$ agrees with $\mu'\times\mu'$ on $\mathfrak{B}'\times\mathfrak{B}'$, considered as a product $\sigma$-algebra. Then, the result follows easily from the definition of a complete product.

\end{proof}

The following definition is based on Definition 0.18 of \cite{dep1};\\

\begin{defn}{Discrete Partial Derivatives}\\
\label{partials}

Let $f:\overline{\mathcal{T}_{\eta}}\times\overline{\mathcal{R}_{\eta}}\rightarrow{^{*}\mathcal{C}}$ be measurable. Then we define ${\partial f\over\partial t}$ to be the unique measurable function satisfying;\\

${\partial f\over\partial t}({j\over\eta},x)=\eta(f({j+1\over\eta},x)-f({j\over\eta},x))$ for $j\in{^{*}\mathcal{N}}_{0\leq j\leq{\eta^{2}-2}}, x\in\overline{\mathcal{R}_{\eta}}$\\

${\partial f\over\partial t}({\eta^{2}-1\over\eta},x)=0$\\

${\partial f\over\partial x}(t,{j\over\eta})=\eta(f(t,{j+1\over\eta})-f(t,{j\over\eta}))$ for $j\in{^{*}\mathcal{N}}_{-\eta^{2}\leq j\leq{\eta^{2}-2}}, t\in\overline{\mathcal{T}_{\eta}}$\\

${\partial f\over\partial x}(t,{\eta^{2}-1\over\eta})=0$\\
\end{defn}

\begin{rmk}
\label{rmkpartials}
If $f$ is measurable, then so are ${\partial f\over\partial t}$, ${\partial f\over\partial x}$ and ${\partial^{2} f\over\partial x^{2}}$. This follows immediately, by transfer, from the corresponding result for the discrete derivatives of discrete functions $f:{\mathcal{T}_{n}}\times{\mathcal{R}_{n}}\rightarrow\mathcal{C}$, where $n\in{\mathcal{N}}$, see Definition 0.15 and Definition 0.18 of \cite{dep1}.
\end{rmk}

\begin{lemma}
\label{nsheat}
Given a measurable boundary condition $g:\overline{\mathcal{R}_{\eta}}\rightarrow{^{*}\mathcal{C}}$, there exists a unique measurable $f:\overline{\mathcal{T}_{\eta}}\times\overline{\mathcal{R}_{\eta}}\rightarrow{^{*}\mathcal{C}}$, satisfying the nonstandard heat equation;\\

${\partial f\over\partial t}-{\partial^{2}f\over\partial x^{2}}=0$ on $({\overline{\mathcal{T}_{\eta}}\setminus [{\eta^{2}-1\over\eta},\eta)})\times\overline{\mathcal{R}_{\eta}}$\\

with $f(0,x)=g(x)$, for $x\in\overline{\mathcal{R}_{\eta}}$, $(*)$.

\end{lemma}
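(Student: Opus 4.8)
The plan is to read the nonstandard heat equation as an \emph{explicit} forward difference scheme and to solve it by hyperfinite time stepping, obtaining existence, uniqueness and measurability by transfer from the finite case.

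First I would rewrite the equation in grid coordinates. Writing $f_{j,k}$ for $f({j\over\eta},{k\over\eta})$, the condition ${\partial f\over\partial t}-{\partial^{2}f\over\partial x^{2}}=0$ at the node $({j\over\eta},{k\over\eta})$, for $0\leq j\leq\eta^{2}-2$, becomes $\eta(f_{j+1,k}-f_{j,k})={\partial^{2}f\over\partial x^{2}}({j\over\eta},{k\over\eta})$. By Definition \ref{partials}, the right hand side is an internal expression in the values of $f$ on the single time slice $t={j\over\eta}$ alone, namely the iterated forward difference in $x$, with the conventions ${\partial f\over\partial x}(t,{\eta^{2}-1\over\eta})=0$ and ${\partial^{2}f\over\partial x^{2}}(t,{\eta^{2}-1\over\eta})=0$ applied at the upper spatial edge. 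Hence the equation is equivalent to the explicit recursion $f_{j+1,k}=f_{j,k}+\eta^{-1}{\partial^{2}f\over\partial x^{2}}({j\over\eta},{k\over\eta})$, which expresses the whole profile at time ${j+1\over\eta}$ as an internal function of the profile at time ${j\over\eta}$. The excluded set $[{\eta^{2}-1\over\eta},\eta)$ is precisely the final time slice, on which ${\partial f\over\partial t}$ is set to $0$ by Definition \ref{partials} and no constraint is imposed; this is exactly what makes the problem a well posed initial value problem stepping forward from $t=0$.

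Given this reformulation, existence and uniqueness reduce to an induction on the time index. The initial condition $f(0,x)=g(x)$ of $(*)$ fixes the slice $j=0$, and the recursion then determines the slice $j+1$ from the slice $j$ for each $j$ with $0\leq j\leq\eta^{2}-2$, which exhausts $\overline{\mathcal{T}_{\eta}}$. To carry this out rigorously in the hyperfinite setting, I would formulate the assertion ``for every boundary datum there is a unique grid function satisfying the finite difference scheme'' as a first order statement about the standard grids $\mathcal{T}_{n}\times\mathcal{R}_{n}$, $n\in\mathcal{N}$, where it holds by the elementary induction just described, and then transfer it to $\eta$. The resulting $f$ is internal, being produced from $g$ by an internal recursion, and any solution is forced to agree with it node by node, giving uniqueness.

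Measurability is then essentially automatic, since an internal function on the hyperfinite set $\overline{\mathcal{T}_{\eta}}\times\overline{\mathcal{R}_{\eta}}$ has internal preimages of all sets in ${^{*}\mathfrak{D}}$, so these preimages lie in $\mathfrak{C}_{\eta}^{2}$; alternatively one may invoke Remark \ref{rmkpartials}, which already records that the discrete derivatives of a measurable function are measurable. I do not anticipate a deep obstacle, precisely because the forward time difference makes the scheme explicit rather than implicit, so no linear system has to be inverted at each step. The points that will require care are the boundary bookkeeping at the upper spatial edge, where the vanishing conventions of Definition \ref{partials} enter the discrete Laplacian and must be tracked so that the recursion stays well defined there, and the verification that the equation is imposed on exactly the node set $({\overline{\mathcal{T}_{\eta}}\setminus [{\eta^{2}-1\over\eta},\eta)})\times\overline{\mathcal{R}_{\eta}}$, so that the final slice carries no constraint and the induction closes.
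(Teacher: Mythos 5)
Your proposal is correct and follows essentially the same route as the paper: both unwind Definition \ref{partials} to express the equation as an explicit forward-in-time recursion determining each time slice from the previous one (with the stated conventions at the upper spatial edge), observe that the recursion together with the initial condition yields a unique measurable solution on the finite grids $\mathcal{T}_{n}\times\mathcal{R}_{n}$, and conclude by transfer. Your remarks on the excluded final time slice and on the edge bookkeeping match the explicit case analysis the paper carries out.
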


\begin{proof}
Observe that, by Definition \ref{partials}, if $f:\overline{\mathcal{T}_{\eta}}\times\overline{\mathcal{R}_{\eta}}\rightarrow{^{*}\mathcal{C}}$ is measurable, then;\\

${\partial^{2}f\over\partial x^{2}}(t,{j\over\eta})={\eta^{2}}(f(t,{j+2\over\eta})-2f(t,{j+1\over\eta})+f(t,{j\over\eta}))$, $(-\eta^{2}\leq j\leq{\eta^{2}-3})$.\\

${\partial^{2}f\over\partial x^{2}}(t,{\eta^{2}-2\over\eta})=-{\eta^{2}}(f(t,{\eta^{2}-1\over\eta})-f(t,{\eta^{2}-2\over\eta}))$\\

${\partial^{2}f\over\partial x^{2}}(t,{\eta^{2}-1\over\eta})=0$\\

Therefore, if $f$ satisfies $(*)$, we must have;\\

$f({i+1\over\eta},{j\over\eta})=f({i\over\eta},{j\over\eta})+\eta(f({i\over\eta},{j+2\over\eta})-2f({i\over\eta},{j+1\over\eta})+f({i\over\eta},{j\over\eta}))$,\\

\indent \ \ \ \ \ \ \ \ \ \ \ \ \ \ \ \ \ \ \ \ \ \ \ \ \ \  $(0\leq i\leq{\eta^{2}-2}, -\eta^{2}\leq j\leq{\eta^{2}-3})$.\\

$f({i+1\over\eta},{\eta^{2}-2\over\eta})=f({i\over\eta},{\eta^{2}-2\over\eta})-\eta(f({i\over\eta},{\eta^{2}-1\over\eta})-f({i\over\eta},{\eta^{2}-2\over\eta}))$,\\

\indent \ \ \ \ \ \ \ \ \ \ \ \ \ \ \ \ \ \ \ \ \ \ \ \ \ \ \ $(0\leq i\leq{\eta^{2}-2})$.\\

$f({i+1\over\eta},{\eta^{2}-1\over\eta})=f({i\over\eta},{\eta^{2}-1\over\eta})$, $(0\leq i\leq{\eta^{2}-2})$.\\

$f(0,{j\over\eta})=g({j\over\eta})$, $(-\eta^{2}\leq j\leq{\eta^{2}-1})$. $(**)$\\

If $\eta=n\in\mathcal{N}$, then given any measurable $g:{\mathcal{R}_{n}}\rightarrow\mathcal{C}$, the condition $(**)$, clearly determines a unique measurable, see Definition 0.15 of \cite{dep1}, $f:{\mathcal{T}_{n}}\times{\mathcal{R}_{n}}\rightarrow\mathcal{C}$, satisfying $(*)$. As the condition $(*)$ can be written down uniformly, in Robinson's higher order logic, we obtain the result, immediately, by transfer.

\end{proof}

\begin{defn}
\label{nstransf}
We recall the definition from \cite{dep1}, Definition 0.15. Given a measurable $f:\overline{\mathcal{T}_{\eta}}\times\overline{\mathcal{R}_{\eta}}\rightarrow{^{*}\mathcal{C}}$, we define $exp_{\eta}(-\pi ixy)$ and $exp(\pi ixy)$ to be the $\mathfrak{C}_{\eta}^{2}$ measurable counterparts of the transfers of $exp(\pi ixy)$ and $exp(-\pi ixy)$ to $\overline{\mathcal{R}_{\eta}}^{2}$. We define the nonstandard Fourier transform in space;\\

$\hat{f}(t,y)=\int_{\overline{\mathcal R}_{\eta}}f(t,x)exp_{\eta}(-\pi ixy)d\lambda_{\eta}(x)$\\

and the nonstandard inverse Fourier transform in space;\\

$\check{f}(t,y)=\int_{\overline{\mathcal R}_{\eta}}f(t,x)exp_{\eta}(\pi ixy)d\lambda_{\eta}(x)$\\

As in Definition 0.20 of \cite{dep1}, we let $\phi_{\eta},\psi_{\eta}:\overline{\mathcal{R}_{\eta}}\rightarrow{^{*}\mathcal{C}}$ be defined by;\\

$\phi_{\eta}(x)=\eta(exp_{\eta}(-\pi i{x\over\eta})-1)$\\

$\psi_{\eta}(x)=\eta(exp_{\eta}(\pi i{x\over \eta})-1)$\\

If $f$ is measurable, we let;\\

$C_{\eta}(t,x)=f(t,{\eta^{2}-1\over \eta})exp_{\eta}(-\pi i{\eta^{2}-1\over \eta}x)-f(t,-\eta)exp_{\eta}(-\pi i(-\eta)x)$\\

$D_{\eta}(t,x)=-{1\over \eta}f(t,-\eta)exp_{\eta}(\pi i{x\over \eta})exp_{\eta}(-\pi i(-\eta)x)$.\\

$C'_{\eta}(t,x)=-{\partial f\over \partial x}(t,-\eta)exp_{\eta}(-\pi i(-\eta)x)$\\

$D'_{\eta}(t,x)=-{1\over \eta}{\partial f\over \partial x}(t,-\eta)exp_{\eta}(\pi i{x\over \eta})exp_{\eta}(-\pi i(-\eta)x)$.\\

$E_{\eta}(t,x)=\phi_{\eta}(x)D_{\eta}(t,x)-C_{\eta}(t,x)$\\

$E'_{\eta}(t,x)=\phi_{\eta}(x)D'_{\eta}(t,x)-C'_{\eta}(t,x)$\\

$F_{\eta}(t,x)=\psi_{\eta}(x)\phi_{\eta}(x)D_{\eta}(t,x)-\psi_{\eta}(x)C_{\eta}(t,x)+\phi_{\eta}(x)D'_{\eta}(t,x)-C'_{\eta}(t,x)$\\

\end{defn}

\begin{rmk}
\label{rmknstransf}
If $f$ is measurable, then so are $\hat{f}$ and $\check{f}$. Again this follows, by transfer, from the finite case, as in Remark \ref{rmkpartials}. By Lemma 0.16 of \cite{dep1}, if $f$ is measurable, then, we have the nonstandard inversion theorems;\\

$\check{\hat{f}}=2f$\\

$\hat{\check{f}}=2f$\\

\end{rmk}

\begin{lemma}
\label{nstransfpartials}
If $f:\overline{\mathcal{T}_{\eta}}\times\overline{\mathcal{R}_{\eta}}\rightarrow{^{*}\mathcal{C}}$ is measurable, then;\\

(i). $\hat{{\partial f\over\partial t}}={\partial{\hat f}\over\partial t}$.\\

(ii). $\hat{{\partial^{2}f\over\partial x^{2}}}=\psi_{\eta}^{2}\hat{f}-F_{\eta}$.\\

\end{lemma}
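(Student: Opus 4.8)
The plan is to treat the two parts separately: part (i) is essentially formal, while part (ii) rests on a single discrete integration-by-parts identity applied twice, together with careful endpoint bookkeeping.

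For part (i), I would observe that the kernel $exp_\eta(-\pi ixy)$ in the definition of $\hat f$ (Definition \ref{nstransf}) does not depend on the time variable $t$, whereas the operator $\partial/\partial t$ of Definition \ref{partials} is a finite difference in $t$ acting at fixed $x$. Hence, at any interior time node $t=j/\eta$ with $0\leq j\leq \eta^2-2$, pulling the constant $\eta$ and the difference through the $\lambda_\eta$-integral in $x$ by linearity gives $\hat{\partial f\over\partial t}(j/\eta,y)=\eta(\hat f((j+1)/\eta,y)-\hat f(j/\eta,y))={\partial\hat f\over\partial t}(j/\eta,y)$, and at the top node $j=\eta^2-1$ both sides vanish by the boundary clauses of Definition \ref{partials}. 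Measurability of every function involved is supplied by Remarks \ref{rmkpartials} and \ref{rmknstransf}.

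For part (ii), the key structural fact I would establish first is that the discrete second derivative is exactly the self-composition of the discrete first derivative on the whole grid, ${\partial^2 f\over\partial x^2}={\partial\over\partial x}({\partial f\over\partial x})$, \emph{including} the two truncated boundary nodes $j=\eta^2-2$ and $j=\eta^2-1$. This is a direct check: iterating the interior rule reproduces the three-point stencil for $-\eta^2\leq j\leq\eta^2-3$, while at $j=\eta^2-2$ one uses ${\partial f\over\partial x}(t,{\eta^2-1\over\eta})=0$ to recover the one-sided formula already displayed in the proof of Lemma \ref{nsheat}, and at $j=\eta^2-1$ both sides are $0$. Next I would record (or re-derive, by shifting the summation index $k=j+1$ in the sum defining $\hat f$) the first-order identity $\hat{\partial f\over\partial x}=\psi_\eta\hat f-E_\eta$, in which the multiplier $\psi_\eta$ arises from the phase $exp_\eta(\pi iy/\eta)=1+\psi_\eta(y)/\eta$ picked up by the shift, and $E_\eta=\phi_\eta D_\eta-C_\eta$ collects the two endpoint contributions at $x=-\eta$ and $x=(\eta^2-1)/\eta$. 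Here the algebraic relation $\phi_\eta(y)\,exp_\eta(\pi iy/\eta)=-\psi_\eta(y)$ is exactly what converts the raw left-endpoint term into the prescribed combination $\phi_\eta D_\eta$.

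Applying this first-order identity twice then yields $\hat{\partial^2 f\over\partial x^2}=\psi_\eta\,\hat{\partial f\over\partial x}-(\text{the }E_\eta\text{-term formed from }{\partial f\over\partial x})=\psi_\eta^2\hat f-\psi_\eta E_\eta-(\text{the }E_\eta\text{-term formed from }{\partial f\over\partial x})$, so it remains to identify the boundary term obtained by feeding ${\partial f\over\partial x}$ into the definitions of $C_\eta,D_\eta$. Using ${\partial f\over\partial x}(t,{\eta^2-1\over\eta})=0$ and reading off ${\partial f\over\partial x}(t,-\eta)$, I would check that this substitution produces precisely $C'_\eta$ and $D'_\eta$, so that the corresponding $E_\eta$-term equals $\phi_\eta D'_\eta-C'_\eta=E'_\eta$. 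Combining, $\hat{\partial^2 f\over\partial x^2}=\psi_\eta^2\hat f-(\psi_\eta\phi_\eta D_\eta-\psi_\eta C_\eta+\phi_\eta D'_\eta-C'_\eta)=\psi_\eta^2\hat f-F_\eta$, which is the claimed formula by the definition of $F_\eta$ in Definition \ref{nstransf}. The main obstacle is exactly this endpoint bookkeeping: matching the correction terms generated by the two index shifts against the specific combinations $C_\eta,D_\eta,C'_\eta,D'_\eta$, and in particular verifying that the truncated boundary clauses for ${\partial f\over\partial x}$ make its $E_\eta$-term collapse cleanly to $E'_\eta$ rather than generating spurious extra terms.
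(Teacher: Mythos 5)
Your proof is correct and takes essentially the same route as the paper: part (i) is verbatim the paper's argument (pull the time difference through the $\lambda_{\eta}$-integral by linearity, with both sides vanishing at the top time node), and part (ii) is precisely the double discrete summation-by-parts that the paper invokes only by citing the transfer of Lemma 0.21 of \cite{dep1} rather than writing out. Your version correctly supplies the two details that citation hides --- that ${\partial^{2} f\over\partial x^{2}}$ is the self-composition of ${\partial f\over\partial x}$ including at the truncated nodes, and that substituting ${\partial f\over\partial x}$ into $C_{\eta},D_{\eta}$ yields exactly $C'_{\eta},D'_{\eta}$ because ${\partial f\over\partial x}(t,{\eta^{2}-1\over\eta})=0$, so the two boundary contributions assemble into $F_{\eta}=\psi_{\eta}E_{\eta}+E'_{\eta}$ as required.
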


\begin{proof}

$(i)$. Using Definition \ref{partials} and Definition \ref{nstransf}, we have:\\

$\hat{{\partial f\over\partial t}}(t',y)=\int_{\overline{\mathcal{R}_{\eta}}}{\partial f\over \partial t}(t',x)exp_{\eta}(-\pi i xy)d\lambda_{\eta}(x)$\\

$=\eta(\int_{\overline{\mathcal{R}_{\eta}}}f(t'+{1\over\eta},x)exp_{\eta}(-\pi i xy)d\lambda_{\eta}(x)-\int_{\overline{\mathcal{R}_{\eta}}}f(t',x)exp_{\eta}(-\pi i xy)d\lambda_{\eta}(x))$,\\

$\indent \ \ \ \ \ \ \ \ \ \ \ \ \ \ \ \ \ \ \ \ \ \ \ \ \ \ \ \ \ \ \ \ \ \ \ \ \ \ \ \ \ \ \ \ \ \ \ \ \ \ \ (0\leq t'<{\eta^{2}-1\over\eta})$\\

$=\eta(\hat{f}(t'+{1\over\eta},y)-\hat{f}(t',y))={\partial{\hat f}\over\partial t}(t',y)$,\indent \ \  $(0\leq t'<{\eta^{2}-1\over\eta})$\\

$\hat{{\partial f\over\partial t}}(t',y)={\partial{\hat f}\over\partial t}(t',y)=0$,\indent \ \ \ \ \ \ \ \ \ \ \ \ \ \ \ \ \ \ \ \ \ $({\eta^{2}-1\over\eta}\leq t'<\eta)$\\

$(ii)$. Using the definition of $\psi_{\eta}$ and $F_{\eta}$ in Definition \ref{nstransf}, and the transfer of the result in Lemma 0.21 of \cite{dep1}.\\

\end{proof}

\begin{theorem}
\label{nstransfsol}
Let $f:\overline{\mathcal{T}_{\eta}}\times\overline{\mathcal{R}_{\eta}}\rightarrow{^{*}\mathcal{C}}$ satisfy the conditions of Lemma \ref{nsheat}. Then $\hat{f}$ is determined by;\\

$\hat{f}({i\over\eta},x)=\hat{g}(x)(1+{\psi_{\eta}^{2}(x)\over\eta})^{i}-{1\over\eta}{^{*}\sum}_{0\leq j\leq i-1}F_{\eta}({j\over\eta},x)(1+{\psi_{\eta}^{2}(x)\over\eta})^{i-j-1}$,\\

$\indent \ \ \ \ \ \ \ \ \ \ \ \ \ \ \ \ \ \ \ \ \ \ \ \ \ \ \ \ \ \ \ \ \ \ \ \ \ \ \ \ \ (0\leq i\leq \eta^{2}-1)$, $(*)$\\

In particular, if the boundary condition $g$ satisfies;\\

$g({\eta^{2}-1\over\eta})=0$\\

$g(x)=0$, for $-\eta\leq x< -\eta+\omega$, where $\omega\in{{^{*}\mathcal{N}}\setminus{\mathcal N}}$\\

then, $\hat{f}$ is determined by;\\

$\hat{f}({i\over\eta},x)=\hat{g}(x)(1+{\psi_{\eta}^{2}(x)\over\eta})^{i}$,  $(0\leq i\leq n\eta,n\in{\mathcal N})$\\

\end{theorem}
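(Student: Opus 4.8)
The plan is to turn the nonstandard heat equation into a first-order recurrence in the time index for the spatial transform $\hat f$, to solve that recurrence in closed form, and then, in the special case, to show that the inhomogeneous term $F_\eta$ vanishes over the relevant range of time steps. First I would apply the spatial Fourier transform to $\frac{\partial f}{\partial t}-\frac{\partial^2 f}{\partial x^2}=0$ and invoke Lemma \ref{nstransfpartials}: part $(i)$ gives $\hat{\frac{\partial f}{\partial t}}=\frac{\partial\hat f}{\partial t}$ and part $(ii)$ gives $\hat{\frac{\partial^2 f}{\partial x^2}}=\psi_\eta^2\hat f-F_\eta$. Equating these on $(\overline{\mathcal{T}_\eta}\setminus[\frac{\eta^2-1}{\eta},\eta))\times\overline{\mathcal{R}_\eta}$ and unwinding $\frac{\partial\hat f}{\partial t}(\frac{i}{\eta},x)=\eta(\hat f(\frac{i+1}{\eta},x)-\hat f(\frac{i}{\eta},x))$ from Definition \ref{partials} yields
\[
\hat f\Big(\frac{i+1}{\eta},x\Big)=\Big(1+\frac{\psi_\eta^2(x)}{\eta}\Big)\hat f\Big(\frac{i}{\eta},x\Big)-\frac{1}{\eta}F_\eta\Big(\frac{i}{\eta},x\Big),
\]
valid for $0\le i\le\eta^2-2$, together with the initial datum $\hat f(0,x)=\hat g(x)$, which follows from $f(0,x)=g(x)$ and the linearity of the transform.

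Second I would solve this inhomogeneous linear recurrence. Setting $a(x)=1+\frac{\psi_\eta^2(x)}{\eta}$, the solution of $u_{i+1}=a\,u_i+b_i$ with $u_0=\hat g$ is $u_i=\hat g\,a^i+\sum_{0\le j\le i-1}b_j\,a^{i-1-j}$; substituting $b_j=-\frac{1}{\eta}F_\eta(\frac{j}{\eta},x)$ gives precisely $(*)$. To make this rigorous in the nonstandard setting I would note that, for each standard $n\in\mathcal{N}$ in place of $\eta$, the identity is a statement about discrete functions on $\mathcal{T}_n\times\mathcal{R}_n$ provable by a one-line induction on $i$; since it can be written uniformly in Robinson's higher order logic, transfer delivers $(*)$ for all $0\le i\le\eta^2-1$.

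Third, for the special case I would show that the whole summation in $(*)$ vanishes for $0\le i\le n\eta$ by proving $F_\eta(\frac{j}{\eta},x)=0$ for $0\le j\le n\eta-1$. From Definition \ref{nstransf} each of $C_\eta,D_\eta,C'_\eta,D'_\eta$, and hence $F_\eta$, is assembled from the three boundary quantities $f(t,\frac{\eta^2-1}{\eta})$, $f(t,-\eta)$ and $\frac{\partial f}{\partial x}(t,-\eta)$, so it suffices to kill these at $t=\frac{j}{\eta}$. The right endpoint is immediate: the recurrence of Lemma \ref{nsheat} gives $f(\frac{i+1}{\eta},\frac{\eta^2-1}{\eta})=f(\frac{i}{\eta},\frac{\eta^2-1}{\eta})$, so $f(\frac{j}{\eta},\frac{\eta^2-1}{\eta})=g(\frac{\eta^2-1}{\eta})=0$ for every $j$. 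For the left endpoint I would run a finite-propagation-speed argument: the update rule computes $f(\frac{i+1}{\eta},\frac{k}{\eta})$ from $f(\frac{i}{\eta},\cdot)$ at spatial indices $k,k+1,k+2$ only, so by induction $f(\frac{j}{\eta},\frac{k}{\eta})$ depends on $g$ solely at indices between $k$ and $k+2j$. Taking $k=-\eta^2$ (for $f(t,-\eta)$) and $k=-\eta^2+1$ (for the forward difference in $\frac{\partial f}{\partial x}(t,-\eta)$), the largest spatial point used corresponds to $-\eta+\frac{2j}{\eta}$, respectively $-\eta+\frac{2j+1}{\eta}$, and for $j\le n\eta-1$ both are $<-\eta+2n<-\eta+\omega$, inside the band on which $g\equiv 0$. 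Hence $f(\frac{j}{\eta},-\eta)=0$ and $\frac{\partial f}{\partial x}(\frac{j}{\eta},-\eta)=\eta(f(\frac{j}{\eta},-\eta+\frac{1}{\eta})-f(\frac{j}{\eta},-\eta))=0$, so $F_\eta(\frac{j}{\eta},x)=0$ throughout the required range and $(*)$ collapses to $\hat f(\frac{i}{\eta},x)=\hat g(x)(1+\frac{\psi_\eta^2(x)}{\eta})^i$.

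I expect the finite-propagation-speed step to be the main obstacle. One must track the domain of dependence precisely and exploit that $\omega$ is infinite while $n$ is standard, so that the spatial spreading $\frac{2j}{\eta}\le 2n$ accumulated over $n\eta$ time steps stays bounded by a standard real and therefore cannot reach across the infinite zero-band of width $\omega$. This is exactly where the two hypotheses on $g$ are consumed, and it is what confines the clean formula to the standard-bounded time indices $i\le n\eta$ rather than the full range $i\le\eta^2-1$ of part $(*)$.
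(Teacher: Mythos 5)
Your proposal is correct and follows essentially the same route as the paper: derive the first-order recurrence for $\hat f$ from Lemma \ref{nstransfpartials} and Definition \ref{partials}, solve it in closed form to get $(*)$, and then kill $F_{\eta}(\frac{j}{\eta},x)$ for $0\le j\le n\eta-1$ by exactly the domain-of-dependence argument at the boundary points $\frac{\eta^{2}-1}{\eta}$, $-\eta$ and $\frac{-\eta^{2}+1}{\eta}$, using that the spread $\frac{2j}{\eta}\le 2n$ is standard while $\omega$ is infinite. The only cosmetic difference is that the paper verifies $(*)$ by hyperfinite induction on the internal set of indices for which it holds, whereas you prove the finite-$n$ identity and transfer; the two devices are interchangeable here.
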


\begin{proof}
We have that;\\

${\partial f\over\partial t}-{\partial^{2}f\over\partial x^{2}}=0$ on $({\overline{\mathcal{T}_{\eta}}\setminus [{\eta^{2}-1\over\eta},\eta)})\times\overline{\mathcal{R}_{\eta}}$\\

Applying the nonstandard Fourier transform, and using Lemma \ref{nstransfpartials}, we have;\\

${\partial\hat{f}\over\partial t}-(\psi_{\eta}^{2}\hat{f}-F_{\eta})=0$ on $({\overline{\mathcal{T}_{\eta}}\setminus [{\eta^{2}-1\over\eta},\eta)})\times\overline{\mathcal{R}_{\eta}}$\\

Using Definition \ref{partials}, we have;\\

$\eta(\hat{f}({k+1\over\eta},x)-\hat{f}({k\over\eta},x))=\psi_{\eta}^{2}(x)\hat{f}({k\over\eta},x)-F_{\eta}({k\over\eta},x)$\\

$\hat{f}({k+1\over\eta},x)=\hat{f}({k\over\eta},x)(1+{\psi_{\eta}^{2}(x)\over\eta})-{1\over\eta}F_{\eta}({k\over\eta},x)$, $(0\leq k\leq \eta^{2}-2)$. $(**)$\\

Let $A=\{i\in{^{*}\mathcal{N}}:0\leq i\leq \eta^{2}-1,\ for \ which \ $(*)$\ holds\}$. Then $A$ is internal, $A(0)$ holds, as $\hat{f}(0,x)=\hat{g}(x)$, by the boundary condition in Lemma \ref{nsheat}, and if $A(i)$ holds, for $0\leq i\leq \eta^{2}-2$, then, using $(**)$;\\

$\hat{f}({i+1\over\eta},x)=[\hat{g}(x)(1+{\psi_{\eta}^{2}(x)\over\eta})^{i}$\\

$-{1\over\eta}{^{*}\sum}_{0\leq j\leq i-1}F_{\eta}({j\over\eta},x)(1+{\psi_{\eta}^{2}(x)\over\eta})^{i-j-1}](1+{\psi_{\eta}^{2}(x)\over\eta})-{1\over\eta}F_{\eta}({i\over\eta},x)$\\

$=\hat{g}(x)(1+{\psi_{\eta}^{2}(x)\over\eta})^{i+1}-{1\over\eta}{^{*}\sum}_{0\leq j\leq i-1}F_{\eta}({j\over\eta},x)(1+{\psi_{\eta}^{2}(x)\over\eta})^{i-j}-{1\over\eta}F_{\eta}({i\over\eta},x)$\\

$=\hat{g}(x)(1+{\psi_{\eta}^{2}(x)\over\eta})^{i+1}-{1\over\eta}{^{*}\sum}_{0\leq j\leq i}F_{\eta}({j\over\eta},x)(1+{\psi_{\eta}^{2}(x)\over\eta})^{i-j}$\\

so $A(i+1)$ holds. It follows, by hyperfinite induction, see \cite{dep2}, that $A=\{i\in{^{*}\mathcal{N}}:0\leq i\leq \eta^{2}-1\}$, and $\hat{f}$ is determined by the condition $(*)$.\\

Now suppose that the boundary condition $g$ satisfies the requirements in the second part of the Theorem, then, using Lemma \ref{nsheat}, we have;\\

$f({i\over\eta},{\eta^{2}-1\over\eta})=f(0,{\eta^{2}-1\over\eta})=g({\eta^{2}-1\over\eta})=0$, $(0\leq i\leq \eta^{2}-1)$\\

Moreover, again by Lemma \ref{nsheat}, $f({i\over\eta},{-\eta^{2}+1\over\eta})$ and $f({i\over\eta},{-\eta})$ are hyperfinite linear combinations of the values $g({j\over\eta})$, for $-{\eta}^{2}\leq j\leq -{\eta}^{2}+1+2i$. For such $j$, and $0\leq i\leq n\eta$, ${j\over\eta}\leq -\eta+{1+2i\over\eta}\leq -\eta+{1+2n}<-\eta+\omega$, so $g({j\over\eta})=0$ by hypothesis, and, then, $f({i\over\eta},{-\eta^{2}+1\over\eta})=f({i\over\eta},{-\eta})=0$, for
$0\leq i\leq n\eta$. Checking the Definition \ref{nstransf}, it follows that $F_{\eta}({i\over\eta},x)=0$, for $0\leq i\leq n\eta$, $n\in{\mathcal N}$. Then, using the first part of the Theorem, we obtain the final result.

\end{proof}

\begin{defn}{Convolution}\\
\label{convolutiondef}
Suppose that $f,g:\overline{{\mathcal T}_{\eta}}\times\overline{{\mathcal R}_{\eta}}\rightarrow {^{*}\mathcal{C}}$ are measurable. Then we define the nonstandard convolution by;\\

$(f*g)(t,x)=\int_{\overline{{\mathcal R}_{\eta}}}f(t,{[\eta x]\over\eta}-y)g(t,y)d\nu_{\eta}(y)$\\

\end{defn}

\begin{theorem}{Nonstandard Convolution Theorem}\\
\label{convolution}
Let hypotheses be as in \ref{convolutiondef}, then;\\

$\hat{f*g}=\hat{f}\hat{g}$\indent \ \ \  $\check{f*g}=\check{f}\check{g}$\\
\end{theorem}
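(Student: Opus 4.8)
The plan is to reduce the statement to the corresponding identity for discrete functions on the finite grids $\mathcal{T}_n\times\mathcal{R}_n$, $n\in\mathcal{N}$, and then obtain the nonstandard result by transfer, exactly as in Remark \ref{rmknstransf} and Lemma \ref{nsheat}. Thus I would first fix $t$ and compute, in the finite case, the spatial Fourier transform of $f*g$ directly. Unwinding Definition \ref{convolutiondef} and Definition \ref{nstransf}, and using that $x=[\eta x]/\eta$ whenever $x$ is a grid point of $\overline{\mathcal{R}_\eta}$ (so that the truncation $[\eta x]/\eta$ in the convolution is harmless under the $\lambda_\eta$-integration), I would write $\hat{f*g}(t,y)=\int_{\overline{\mathcal{R}_\eta}}\Big(\int_{\overline{\mathcal{R}_\eta}}f(t,x-z)g(t,z)\,d\nu_\eta(z)\Big)exp_\eta(-\pi i x y)\,d\lambda_\eta(x)$.

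Next I would interchange the two (hyperfinite) integrations. In the finite case this is simply the Fubini theorem for finite sums, and for the nonstandard statement it is licensed by Lemma \ref{squaremmp}, which identifies $L(\lambda_\eta^{2})$ with the product measure; this lets me bring $g(t,z)$ outside and isolate the inner integral $\int_{\overline{\mathcal{R}_\eta}}f(t,x-z)\,exp_\eta(-\pi i x y)\,d\lambda_\eta(x)$. The algebraic heart of the argument is then the multiplicativity of the kernel, $exp_\eta(-\pi i x y)=exp_\eta(-\pi i (x-z) y)\,exp_\eta(-\pi i z y)$, which is the transfer to grid points of the standard identity $e^{-\pi i x y}=e^{-\pi i (x-z) y}e^{-\pi i z y}$ and holds because $exp_\eta$ agrees with ${}^{*}exp$ on $\overline{\mathcal{R}_\eta}^{2}$. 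Performing the substitution $u=x-z$ in the inner integral factors it as $\hat{f}(t,y)\,exp_\eta(-\pi i z y)$, whence the outer integral collapses to $\hat{f}(t,y)\int_{\overline{\mathcal{R}_\eta}}g(t,z)\,exp_\eta(-\pi i z y)\,d\nu_\eta(z)=\hat{f}(t,y)\hat{g}(t,y)$.

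The step I expect to be the main obstacle is the change of variables $u=x-z$ on the finite grid, together with the bookkeeping of the two measures $\lambda_\eta$ and $\nu_\eta$. On a genuinely finite grid the map $x\mapsto x-z$ need not be a bijection of $\overline{\mathcal{R}_\eta}$ onto itself: for $x$ near the endpoints, $x-z$ can fall outside $\{j/\eta:-\eta^{2}\leq j\leq \eta^{2}-1\}$, and these boundary indices are exactly what the truncation $[\eta x]/\eta$ and the auxiliary measure $\nu_\eta$ in Definition \ref{convolutiondef} are designed to absorb, so that the convolution is effectively cyclic and the reindexing is a bijection modulo the grid. I would therefore verify carefully that, with $\nu_\eta$ as defined, the double sum reindexes without residual boundary terms, that the inner integral over $u$ genuinely reproduces $\hat{f}(t,y)$ rather than a truncated partial transform, and that the outer $\nu_\eta$-integral really returns $\hat{g}(t,y)$; this is the one place where the discrete analogue deviates from the classical continuous proof.

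Finally, since every quantity above is built from discrete functions and finite sums, the identity $\hat{f*g}=\hat{f}\hat{g}$ can be written down uniformly in $n$ in Robinson's higher order logic, so the nonstandard version follows by transfer. The companion identity $\check{f*g}=\check{f}\check{g}$ is obtained by the same computation with $exp_\eta(\pi i x y)$ in place of $exp_\eta(-\pi i x y)$, or, alternatively, by applying the inversion theorems $\check{\hat{h}}=2h=\hat{\check{h}}$ of Remark \ref{rmknstransf} to the first identity.
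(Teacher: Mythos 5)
Your proposal is correct and is essentially the paper's own proof run in reverse: the paper expands $\hat{f}\hat{g}$ as a double hyperfinite sum, reindexes via $l=j+k$ using the multiplicativity of $exp_{\eta}$ and its periodicity in the index at grid points, and then recognises the inner sum over $k$ as the convolution integral against $d\nu_{\eta}$ --- which is exactly your Fubini-plus-substitution computation read backwards. The cyclic reindexing you flag as the main obstacle is a genuine subtlety, and the paper in fact passes over it silently at the step labelled $(l=j+k)$, so your stated intention to check that the boundary terms are absorbed by the truncation $[\eta x]/\eta$ and the measure $\nu_{\eta}$ is the only point at which you would be more careful than the source.
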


\begin{proof}
This is a straightforward computation. We have, for $x\in{\overline{{\mathcal R}_{\eta}}}$, using Definition 0.15 of \cite{dep1}, that;\\

$\hat{f}\hat{g}(t,x)={1\over \eta^{2}}[{^{*}\sum}_{j=-\eta^{2}}^{\eta^{2}-1}f(t,{j\over\eta})exp_{\eta}(-\pi i({j\over\eta})x)][{^{*}\sum}_{k=-\eta^{2}}^{\eta^{2}-1}g(t,{k\over\eta})exp_{\eta}(-\pi i({k\over\eta})x)]$\\

$={1\over \eta^{2}}{^{*}\sum}_{j,k=-\eta^{2}}^{\eta^{2}-1}f(t,{j\over\eta})g(t,{k\over\eta})exp_{\eta}(-\pi i({j+k\over \eta})x)$\\

$={1\over \eta^{2}}{^{*}\sum}_{l,k=-\eta^{2}}^{\eta^{2}-1}f(t,{l-k\over\eta})g(t,{k\over\eta})exp_{\eta}(-\pi i({l\over \eta})x)$     $(l=j+k)$\\

$={1\over \eta}{^{*}\sum}_{l=-\eta^{2}}^{\eta^{2}-1}(\int_{\overline{{\mathcal R}_{\eta}}}f(t,{l\over\eta}-w)g(w)d\nu_{\eta}(w))exp_{\eta}(-\pi i({l\over \eta})x)$\\

$={1\over \eta}{^{*}\sum}_{l=-\eta^{2}}^{\eta^{2}-1}(f*g)(t,{l\over \eta})exp_{\eta}(-\pi i({l\over \eta})x)$\\

$=\hat{f*g}(t,x)$\\

A similar calculation shows that $\check{f*g}=\check{f}\check{g}$\\

\end{proof}

\begin{defn}
\label{truncation}
For $\omega'\in{^{*}\mathcal{N}}\setminus{\mathcal{N}}$, with $\omega'<\eta$, we let $F_{\omega'}:\overline{{\mathcal T}_{\eta}}\times\overline{{\mathcal R}_{\eta}}\rightarrow {^{*}\mathcal{R}}$ be the measurable function defined by;\\

$F_{\omega'}(t,{j\over\eta})={1\over 2}$, if $-\omega'\eta\leq j\leq\omega'\eta$\\

$F_{\omega'}(t,{j\over\eta})=0$, $otherwise$\\

and let $F_{\eta}={1\over 2}Id_{\overline{{\mathcal T}_{\eta}}\times\overline{{\mathcal R}_{\eta}}}$\\

\end{defn}

\begin{lemma}
\label{sol}
Let $f$ satisfy the hypotheses of Theorem \ref{nstransfsol}, with the extra requirement on the boundary condition $g$, then, for finite $t$;\\

$\check{F_{\omega'}}*f=(hF_{\omega'})\check{}*g$\\

where $h$ is given by;\\

$h(t,x)=(1+{1\over\eta}\psi_{\eta}(x)^{2})^{[\eta t]}$\\

\end{lemma}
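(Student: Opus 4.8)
The plan is to reduce the identity to the multiplicativity of the nonstandard Fourier transform under convolution, using the inversion theorem to pass back and forth.

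First I would extract the consequence of Theorem \ref{nstransfsol} that does the real work. Since $t$ is finite, $i := [\eta t]$ satisfies $i \le n\eta$ for some $n\in\mathcal{N}$, so the second part of Theorem \ref{nstransfsol} applies (this is exactly where the extra requirement on the boundary condition $g$ is used), giving
\[
\hat{f}(t,x) = \hat{f}\Bigl(\tfrac{i}{\eta},x\Bigr) = \hat{g}(x)\Bigl(1+\tfrac{\psi_{\eta}^{2}(x)}{\eta}\Bigr)^{i} = h(t,x)\,\hat{g}(x),
\]
with $h(t,x)=(1+\tfrac{1}{\eta}\psi_{\eta}(x)^{2})^{[\eta t]}$ read off directly. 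Here I regard $g$ and $\hat{g}$ as functions on $\overline{\mathcal{T}_{\eta}}\times\overline{\mathcal{R}_{\eta}}$ that are constant in $t$, so that the spatial convolution of Definition \ref{convolutiondef} is meaningful. Thus, for finite $t$, the task is to prove $\check{F_{\omega'}}*f=(hF_{\omega'})\check{}*g$ given only $\hat{f}=h\hat{g}$.

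Next I would apply the spatial hat transform to each side, then simplify with the Nonstandard Convolution Theorem \ref{convolution} and the inversion formula $\hat{\check{u}}=2u$ of Remark \ref{rmknstransf}. For the left-hand side,
\[
\widehat{\check{F_{\omega'}}*f}=\widehat{\check{F_{\omega'}}}\cdot\hat{f}=2F_{\omega'}\cdot h\hat{g},
\]
applying inversion to $F_{\omega'}$ and then substituting $\hat{f}=h\hat{g}$. For the right-hand side, the same inversion formula applied to $hF_{\omega'}$ gives
\[
\widehat{(hF_{\omega'})\check{}*g}=\widehat{(hF_{\omega'})\check{}}\cdot\hat{g}=2(hF_{\omega'})\cdot\hat{g}.
\]
The two outputs coincide (for finite $t$), their common value being $2hF_{\omega'}\hat{g}$.

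Finally I would invoke injectivity of the hat transform to deduce equality of the functions from equality of their transforms: if $\hat{u}=\hat{v}$ then $u=\tfrac12\check{\hat{u}}=\tfrac12\check{\hat{v}}=v$ by Remark \ref{rmknstransf}, and since the check acts only in the space variable this conclusion is valid for finite $t$. Taking $u=\check{F_{\omega'}}*f$ and $v=(hF_{\omega'})\check{}*g$ yields the claim. Everything is routine once $\hat{f}=h\hat{g}$ is in hand; the only points requiring care are the measurability of $h$ and of $hF_{\omega'}$, so that Theorem \ref{convolution} and Remark \ref{rmknstransf} genuinely apply, the bookkeeping convention that $g$ be treated as $t$-independent, and — the genuinely essential hypothesis — the restriction to finite $t$, where the inhomogeneous $F_{\eta}$-terms of Theorem \ref{nstransfsol} vanish and the clean relation $\hat{f}=h\hat{g}$ holds.
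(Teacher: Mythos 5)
Your proposal is correct and follows essentially the same route as the paper: obtain $\hat{f}=h\hat{g}$ for finite $t$ from the second part of Theorem \ref{nstransfsol}, apply the hat transform to both convolutions via Theorem \ref{convolution} and the inversion formula of Remark \ref{rmknstransf}, observe that both sides transform to $2hF_{\omega'}\hat{g}$, and invert to conclude. Your additional remarks on where the extra boundary hypothesis enters and on treating $g$ as $t$-independent are sensible elaborations of steps the paper leaves implicit.
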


\begin{proof}
By Theorem \ref{nstransfsol}, for finite $t$, $\hat{f}=h\hat{g}$, and so, $\hat{f}F_{\omega'}=hF_{\omega'}\hat{g}$. Let $a=(hF_{\omega'})\check{}$ and $b=\check{F_{\omega'}}$, then, by Theorem \ref{convolution} and Remark \ref{rmknstransf}, $\hat{a*g}=\hat{a}\hat{g}=2hF_{\omega'}\hat{g}=2F_{\omega'}\hat{f}$, and, similarly, $\hat{b*f}=\hat{b}\hat{f}=2F_{\omega'}\hat{f}$. Therefore, $\hat{a*g}=\hat{b*f}$, and, again using Remark \ref{rmknstransf}, we obtain $b*f=a*g$, as required.
\end{proof}

\begin{defn}
\label{nsheatker}
We call $\Psi_{\omega'}(t,x,y)=(hF_{\omega'})\check{}(t,x-y)$ a nonstandard heat kernel on ${\overline{{\mathcal T}_{\eta}}}\times{\overline{{\mathcal R}_{\eta}}}^{2}$.

\end{defn}

\begin{lemma}
\label{specnsheatker}
Let $\Psi(t,x,y)$ be as in Definition \ref{nsheatker}. Then, for finite $(t,x,y)\in\overline{\mathcal{T}_{\eta}}\times\overline{{\mathcal R}_{\eta}}^{2}$, with ${^{\circ}t}\neq 0$, and $\omega'\leq log({\eta})^{1\over 2}$;\\

${^{\circ}\Psi}(t,x,y)={1\over\sqrt{4\pi {^{\circ} t}}}exp({-({^{\circ}x}-{^{\circ}y})^{2}\over 4{^{\circ}t}})$\\

\end{lemma}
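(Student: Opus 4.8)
The plan is to unfold the definition of $\Psi$ into a hyperfinite sum, recognise it as a Riemann-type sum approximating a Gaussian integral, and then compute its standard part. Combining Definition \ref{nsheatker}, Definition \ref{truncation}, and the expression for the inverse transform as a scaled hyperfinite sum used in the proof of Theorem \ref{convolution}, one has, writing $z=x-y$,
\[
\Psi(t,x,y) = \frac{1}{2\eta}\,{^{*}\sum}_{j=-\omega'\eta}^{\omega'\eta} h\left(t,\frac{j}{\eta}\right) exp_{\eta}\left(\pi i \frac{j}{\eta}z\right),
\]
where $h(t,\xi)=(1+\frac{1}{\eta}\psi_{\eta}(\xi)^{2})^{[\eta t]}$ and the factor $\frac{1}{2}$ comes from $F_{\omega'}$. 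Putting $\xi=j/\eta$, the points $\xi$ form a grid of mesh $1/\eta$ filling $[-\omega',\omega']$, so the right-hand side is $\frac{1}{2}$ times a Riemann sum for $\int h(t,\xi)e^{i\pi\xi z}\,d\xi$.

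First I would analyse the summand pointwise. From $\psi_{\eta}(\xi)=\eta(exp_{\eta}(\pi i\xi/\eta)-1)$ and the half-angle identity one obtains $\frac{1}{\eta}\psi_{\eta}(\xi)^{2}=-4\eta\,exp_{\eta}(\pi i\xi/\eta)\sin^{2}(\pi\xi/(2\eta))$. For $|\xi|\le\omega'\le(\log\eta)^{1/2}$ the argument $\pi\xi/\eta$ is infinitesimal, whence $\frac{1}{\eta}\psi_{\eta}(\xi)^{2}=-\pi^{2}\xi^{2}/\eta+(\text{higher order})$, itself infinitesimal. Taking logarithms, $\log h(t,\xi)=[\eta t]\log(1+\frac{1}{\eta}\psi_{\eta}(\xi)^{2})$ has standard part $-\pi^{2}\xi^{2}\,{}^{\circ}t$ for each finite $\xi$, the correction terms (of orders $\xi^{3}/\eta$ and $\xi^{4}/\eta$ in the logarithm) being infinitesimal. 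Hence the standard part of the summand is $e^{-\pi^{2}\xi^{2}\,{}^{\circ}t}\,e^{i\pi\xi\,{}^{\circ}z}$.

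The crux is to justify replacing the hyperfinite sum by the corresponding standard integral, i.e.\ to prove the internal integrand is $S$-integrable so that, through the standard-part/Loeb-measure correspondence of Lemma \ref{squaremmp} and $(\sharp)$ of the proof there, ${}^{\circ}(\frac{1}{\eta}\,{^{*}\sum}\cdots)=\int_{-\infty}^{\infty}{}^{\circ}(\cdots)\,d\xi$. I would obtain a uniform Gaussian domination valid across the entire summation range from the modulus identity $\left|1+\frac{1}{\eta}\psi_{\eta}(\xi)^{2}\right|^{2}=(1-v)^{2}+\frac{v^{2}}{\eta}$, where $v=4\eta\sin^{2}(\pi\xi/(2\eta))$; since $v$ is infinitesimal throughout the range, this base is $<1$ and $|h(t,\xi)|$ decays like $e^{-\pi^{2}\xi^{2}\,{}^{\circ}t}$. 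Because ${}^{\circ}t\neq 0$ guarantees absolute (Gaussian) convergence of the limiting integral, this domination lets me split the sum into a finite-$\xi$ bulk, converging to $\int{}^{\circ}(\text{summand})\,d\xi$, and an infinite-$\xi$ tail of infinitesimal total mass. This is exactly where the hypothesis $\omega'\le(\log\eta)^{1/2}$ enters: it is large enough that truncating at $|\xi|=\omega'$ discards only infinitesimal Gaussian mass (of order $\eta^{-\pi^{2}\,{}^{\circ}t}$), yet small enough that the higher-order phase and magnitude errors in the pointwise approximation stay uniformly infinitesimal up to the cutoff. I expect this $S$-integrability and tail-control step to be the main obstacle.

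Finally, evaluating the resulting standard Gaussian integral,
\[
{}^{\circ}\Psi(t,x,y)=\frac{1}{2}\int_{-\infty}^{\infty}e^{-\pi^{2}\,{}^{\circ}t\,\xi^{2}}\,e^{i\pi\,{}^{\circ}z\,\xi}\,d\xi=\frac{1}{2\sqrt{\pi\,{}^{\circ}t}}\,e^{-({}^{\circ}z)^{2}/(4\,{}^{\circ}t)}=\frac{1}{\sqrt{4\pi\,{}^{\circ}t}}\,e^{-({}^{\circ}x-{}^{\circ}y)^{2}/(4\,{}^{\circ}t)},
\]
using ${}^{\circ}z={}^{\circ}x-{}^{\circ}y$, which is the claimed identity. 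The pointwise limit of the second paragraph and the final integral evaluation are routine; it is the passage from the hyperfinite sum over an unbounded frequency range to the improper Gaussian integral that carries the analytic content.
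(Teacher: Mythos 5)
Your proposal is correct and has the same overall architecture as the paper's proof: compute the pointwise standard part of $hF_{\omega'}$, establish a domination bound giving $S$-integrability so that the standard part passes inside the hyperfinite integral, and evaluate the resulting Gaussian Fourier integral (your closing computation agrees with the paper's footnote \ref{transform}). The difference lies in how the two key estimates are obtained, and your route is genuinely more direct. For the pointwise limit ${^{\circ}h}(t,x)=exp(-\pi^{2}\,{^{\circ}t}\,{^{\circ}x}^{2})$ the paper works with the standard sequences $s_{n}(y)=n(exp(\pi iy/n)-1)$ and $r_{n}(w)=(1+w/n)^{n}$, proves uniform convergence on bounded intervals with explicit rates (footnotes \ref{roc}--\ref{roc3}), and transfers; you instead use the half-angle identity ${1\over\eta}\psi_{\eta}(\xi)^{2}=-4\eta\, exp_{\eta}(\pi i\xi/\eta)\sin^{2}(\pi\xi/(2\eta))$ and take logarithms inside the nonstandard model. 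For the domination, the paper assembles the rate estimates into $|(hF_{\omega'})_{t}|\leq f_{t,\eta}$ with $f_{t}(x)=C_{t}/x^{2}$ for $|x|>1$, splitting into several ranges of $t$ and $x$ (the estimates $(\dag),(\dag\dag),(\dag\dag\dag)$); your exact modulus identity $|1+{1\over\eta}\psi_{\eta}(\xi)^{2}|^{2}=(1-v)^{2}+v^{2}/\eta$, $v=4\eta\sin^{2}(\pi\xi/(2\eta))$, yields a single uniform Gaussian bound over the whole truncation range, which is stronger and avoids the case analysis; note, though, that the paper's explicit rate constants are reused in the proof of Theorem \ref{specsol} (footnote \ref{roc4}), so they are not redundant in the larger scheme. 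Two minor corrections: the exchange of standard part and hyperfinite integral should be credited to the $S$-integrability machinery (Corollary 5 of \cite{and} and Theorem 3.24 of \cite{dep2}), not to Lemma \ref{squaremmp}, which concerns only the product measure; and your $S$-integrability step itself (that the Gaussian-dominated tail has infinitesimal hyperfinite mass and that the bulk converges) is asserted rather than carried out, which is precisely where the paper spends most of its effort.
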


\begin{proof}
We first claim that, for finite $x\in{\overline{\mathcal{R}_{\eta}}}$, ${^{\circ}\gamma}_{\eta}(x)=exp(-\pi^{2}{^{\circ} x}^{2})$, where $\gamma_{\eta}(x)=(1+{1\over\eta}\psi_{\eta}(x)^{2})^{\eta}$, $(*)$. For $y\in\mathcal{R}$, let $(s_{n})_{n\in\mathcal{N}}$ be the standard sequence, defined by;\\

$s_{n}(y)={exp(\pi i{y\over n})-1\over {1\over n}}=y({exp(\pi i{y\over n})-1\over {y\over n}})$, $(y\neq 0)$\\

Then, for $y\neq 0$;\\

$lim_{n\rightarrow\infty}(s_{n}(y))=lim_{h\rightarrow 0}y({exp(\pi ih)-1\over h})=y{d\over ds}|_{s=0}exp(\pi is)=\pi iy$\\

and the sequence converges uniformly in $y$, on bounded intervals, (\footnote{\label{roc} We estimate the rate of convergence of the sequence $p_{n}=n(exp(\pi i{1\over n})-1)-i\pi$. We have;\\

 $p_{n}=\sum_{m\geq 2}{(\pi i)^{m}({1\over n})^{m-1}\over m!}={-\pi^{2}\over n}\sum_{m\geq 0}{({\pi i\over n})^{m}\over (m+2)!}$\\

 $|p_{n}|\leq{\pi^{2}\over n}\sum_{m\geq 0}{({\pi\over n})^{m}\over m!}={\pi^{2}\over n}exp({\pi\over n})\leq {\pi^{2}exp(\pi)\over n}$\\

In particular, $|p_{n}|<\epsilon$, and, therefore, $|s_{n}(y)-i\pi y|<\epsilon |y|$, if $n\geq {\pi^{2}exp(\pi)\over \epsilon}$. Hence, $|s_{n}(y)-i\pi y|<\epsilon$, if $n\geq{\pi^{2}exp(\pi)|y|\over\epsilon}$}). Now, it is standard result,(\footnote{\label{roc2} We estimate the rate of convergence of the sequence $q_{n}(w)=r_{n}(w)-exp(w)$, for $w\in{\mathcal C}$. We have, taking a branch of the logarithm with $log(1)=0$, and cutting the complex plane from $-\infty$ to $-1$, for $n>|w|$;\\

$log(r_{n}(w))-w=nlog(1+{w\over n})-w$\\

$|log(r_{n}(w))-w|\leq \sum_{m=1}^{\infty}{|w|^{m+1}\over (m+1)n^{m}}\leq {|w|^{2}\over n}\sum_{m=0}^{\infty}{|w|^{m}\over n^{m}}={|w|^{2}\over n}{1\over 1-{|w|\over n}}={|w|^{2}\over n-|w|}$\\

Moreover, observe that, for $w\in\mathcal{C}$;\\

$|exp(w)-1|\leq\sum_{m=1}^{\infty}{|w|^{m}\over m!}=|w|\sum_{m=0}^{\infty}{|w|^{m}\over (m+1)!}\leq |w|exp(|w|)$\\

Therefore, for $\epsilon>0$, $|exp(w)-1|<\epsilon$, if $|w|<min({\epsilon\over e},1)$, and, for $w',w\in\mathcal{C}$, with $Re(w)\leq 0$,  $|exp(w')-exp(w)|<\epsilon$ if $|exp(w'-w)-1|<\epsilon\leq\epsilon |exp(-w)|$. Hence, $|exp(w')-exp(w)|<\epsilon$, if $|w'-w|<min({\epsilon\over e},1)$, for $Re(w)\leq 0$.\\

So, for $Re(w)\leq 0$, if ${|w|^{2}\over n-|w|}<min({\epsilon\over e},1)$, that is $n>|w|+|w|^{2} max(1,{e\over \epsilon})$, then $|q_{n}(w)|=|r_{n}(w)-exp(w)|<\epsilon$.}), that the sequence of functions $(r_{n}(w))_{n\in\mathcal{N}}$, defined by;\\

$r_{n}(w)=(1+{w\over n})^{n}$\\

converges uniformly to $exp(w)$ on bounded subsets of $\mathcal{C}$. Therefore, if $(t_{n})_{n\in\mathcal{N}}$ is the sequence defined by $t_{n}=r_{n}(s_{n}^{2})$, then;\\

$lim_{n\rightarrow\infty}t_{n}=exp(-\pi^{2}y^{2})$\\

It follows that the sequence of functions $t_{n}(y)$ converges uniformly to $exp(-\pi^{2}y^{2})$ on bounded intervals of $\mathcal {R}$, (\footnote{\label{roc3} We estimate the rate of convergence of the sequence $b_{n}(y)=t_{n}(y)-exp(-\pi^{2}y^{2})$, for $y\in{\mathcal{R}}$, $y\neq 0$. It is a straightforward calculation, to show that, if $|s_{n}(y)-i\pi y|<min(2|y|,{\epsilon\over 3|y|})$, then $|s_{n}^{2}(y)-(-\pi^{2}y^{2})|<\epsilon$. Combining this with the result of footnote \ref{roc}, we obtain that if $n>max({\pi^{2}exp(\pi)\over 2},{3\pi^{2}exp(\pi)y^{2}\over\epsilon})$, $(*)$, then $|s_{n}^{2}(y)-(-\pi^{2}y^{2})|<\epsilon$. Using footnote \ref{roc2}, we also have that if $\epsilon<min({\delta\over e},1)$, then $|exp(s_{n}^{2}(y))-exp(-\pi^{2}y^{2})|<\delta$, $(**)$. Now, assuming $(*)$ is satisfied, we have $|s_{n}(y)|<(\epsilon+\pi^{2}y^{2})^{1\over 2}$. Then, using footnote \ref{roc2}, if $\epsilon<min({\delta\over e},1,{\pi^{2}y^{2}\over 2})$, $(\dag)$, $n>max((\epsilon+\pi^{2}y^{2})^{1\over 2}+(\epsilon+\pi^{2}y^{2})max(1,{e\over\delta}),max({\pi^{2}exp(\pi)\over 2},{3\pi^{2}exp(\pi)y^{2}\over\epsilon}))$, $(\dag\dag)$, then $|exp(s_{n}^{2}(y))-exp(-\pi^{2}y^{2})|<\delta$, $|r_{n}(s_{n}^{2}(y))-exp(s_{n}^{2}(y))|<\delta$, so $|b_{n}(y)|=|t_{n}(y)-exp(-\pi^{2}y^{2})=|r_{n}(s_{n}^{2}(y))-exp(-\pi^{2}y^{2})|<2\delta$, $(***)$. Now, if $\delta<min(e,\pi^{2}y^{2}e)$, we can satisfy $(\dag)$ by taking $\epsilon={\delta\over 2e}$. Substituting into $(\dag\dag)$, we obtain, if $n>max((1+\pi^{2}y^{2})^{1\over 2}+(1+\pi^{2}y^{2}){e\over\delta},{\pi^{2}exp(\pi)\over 2},{6\pi^{2}exp(\pi)y^{2}e\over\delta})$, then $(***)$ holds. Taking $\delta={1\over 2|y|^{r}}$, for $r\in\mathcal{N}$, there exist constants $C_{2},C_{3}>0$, such that, for all $y\in\mathcal{R}$, if $n>max(C_{2},C_{3}|y|^{r+2})$, then $|b_{n}(y)|<{1\over |y|^{r}}$.}) In particular, given $N,\epsilon>0$ standard the statement;\\

$\forall y\leq N\exists M\forall n\geq M(|t_{n}(y)-exp(-\pi^{2}y^{2})|<\epsilon)$\\

is true in $\mathcal{R}$, therefore, by transfer, is true in ${^{*}\mathcal{R}}$. As $\epsilon$ and $N$ were arbitrary, it follows that, for all finite $x\in{\overline{\mathcal R}_{\eta}}$;\\

$\gamma_{\eta}(x)\simeq t_{\eta}(x)\simeq {^{*}exp}(-\pi^{2}x^{2})\simeq exp(-\pi^{2}{^{\circ} x}^{2})$\\

using continuity of $exp$, and Theorem 2.25 of \cite{dep2} or \cite{rob}. Therefore, $(*)$ holds. Now, by continuity of the function $q(w)=w^{s}$, for $s\in\mathcal{R}$, and the fact that ${\eta{^{\circ}t}-[\eta t]\over\eta}\simeq 0$, for finite $t\in{\overline{\mathcal{R}_{\eta}}}$, it follows, again using \cite{rob} or Theorem 2.25 of \cite{dep2}, that, for finite  $(t,x)\in{\overline{\mathcal{T}_{\eta}}}\times{\overline{\mathcal{R}_{\eta}}}$, ${^{\circ}h}(t,x)=exp(-\pi^{2}{^{\circ}t}{^{\circ}x}^{2})$, $(**)$.\\

Now if $\omega''\in{{^{*}\mathcal{R}}\setminus{\mathcal R}}$, with $|\omega''|\leq{\eta}^{1\over 4}$, then, in particular, $\eta>max(C_{2},C_{3}|\omega''|^{3})$, see footnote \ref{roc3}. Hence, we have, by transfer, that;\\

$|t_{\eta}(\omega'')-{^{*}exp(-\pi^{2}\omega''^{2})}|<{1\over |\omega''|}\simeq 0$\\

for infinite $\omega''$. As $lim_{x\rightarrow\infty}exp(-\pi^{2}x^{2})=0$, it is a standard result, see \cite{rob}, that ${^{*}exp(-\pi^{2}\omega''^{2})}\simeq 0$, hence $|t_{\eta}(\omega'')|\simeq 0$, and ${^{\circ}t_{\eta}(\omega'')}=0$. Now, by a similar argument to the above, for finite $t$, with ${^{\circ}t}\neq 0$, we have $h(t,\omega'')\simeq 0$. Combining these results, we have that;\\

 ${^{\circ}hF_{\omega'}}|_{st^{-1}({\mathcal T}_{>0})\times{\overline{\mathcal{R}}_{\eta}}}=st^{*}(exp(-\pi^{2}tx^{2})_{\infty})$ $(\sharp)$\\

for $\omega'\in{^{*}\mathcal{N}\setminus\mathcal{N}}$, with $\omega'\leq{\eta}^{1\over 4}$. Here, we adopt the notation in Definition 0.5 of \cite{dep1}, letting $exp(-\pi^{2}tx^{2})_{\infty}$ denote the extension of $exp(-\pi^{2}tx^{2})$ on $\mathcal{T}_{>0}\times\mathcal{R}$ to $\mathcal{T}_{>0}^{+\infty}\times\mathcal{R}^{+-\infty}$, by setting $exp(-\pi^{2}tx^{2})_{\infty}=0$, at infinite values.\\

Now, for finite $(t,x)\in{\overline{\mathcal{T}_{\eta}}}\times{\overline{\mathcal R}_{\eta}}$, we have, by $(**)$, that;\\

$|h(t,x)|\leq 2{^{*}exp}(-\pi^{2}tx^{2})$ $(\dag)$\\

For $(t,x)\in{\overline{\mathcal{T}_{\eta}}}\times{\overline{\mathcal R}_{\eta}}$, with $t>1$ finite, and $x$ infinite, with $x\leq{\eta}^{1\over 5}$, we have, using footnote \ref{roc3}, that;\\

$|t_{\eta}(x)|^{t}<|t_{\eta}(x)|\leq |t_{\eta}(x)|\leq {^{*}exp(-\pi^{2}x^{2})}+{1\over x^{2}}\leq {C\over x^{2}}$ $(\dag\dag)$\\

where $C\in\mathcal{R}_{>0}$.\\

For $(t,x)\in{\overline{\mathcal{T}_{\eta}}}\times{\overline{\mathcal R}_{\eta}}$, with ${1\over r}<{^{\circ}t}\leq 1$, ${r\in\mathcal{N}}$ and $x$ infinite, with $x\leq{\eta}^{1\over 2r+3}$, we have, by footnote \ref{roc3}, that;\\

$|t_{\eta}(x)|\leq{^{*}exp(-\pi^{2}x^{2})}+{1\over x^{2r}}\leq {C'\over x^{2r}}$\\

$|t_{\eta}(x)|^{t}<|t_{\eta}(x)|^{1\over r}\leq {C''\over x^{2}}$ $(\dag\dag\dag)$\\

where $C',C''\in\mathcal{R}_{>0}$. Combining the estimates, $(\dag),(\dag\dag),(\dag\dag\dag)$, and, using the fact that $h(x,t)$ is the measurable counterpart of $t_{\eta}(x)^{t}$, we have, for $\omega'\leq (log(\eta))^{1\over 2}$, and $t$ finite, $0<{^{\circ}t}$, that;\\

$|(hF_{\omega'})_{t}|\leq f_{t,\eta}$\\

Here, $f_{t,\eta}:{\overline{\mathcal R}_{\eta}}\rightarrow{\overline{\mathcal R}_{\eta}}$ is the measurable counterpart of the ${^{*}}$-continuous function $f_{t}:{^{*}{\mathcal R}}\rightarrow{^{*}{\mathcal R}}$ given by;\\

$f_{t}(x)=C_{t}$ if $|x|\leq 1$\\

$f_{t}(x)={C_{t}\over x^{2}}$ if $|x|>1$\\

and $C_{t}\in\mathcal{R}_{>2}$, depends on $t$. Now, using the proof of Theorem 0.17 in \cite{dep1}, it follows that $f_{t,\eta}$ is $S$-integrable. Then, using \cite{and}, Corollary 5, it follows that $(hF_{\omega'})_{t}(w)$ and $(hF_{\omega'})_{t}(w)exp_{\eta}(\pi iwz)$ are $S$-integrable, $d\lambda_{\eta}(w)$, for finite $z\in{\overline{\mathcal R}_{\eta}}$. Moreover, using \cite{dep2}, Theorem 3.24, and $(\sharp)$, we have, for finite $(t,z)\in \overline{\mathcal{T}_{\eta}}\times\overline{{\mathcal R}_{\eta}}$, with ${^{\circ}t}\neq 0$, that;\\

${^{\circ}(hF_{\omega'})\check{}(t,z)}={^{\circ}\int_{\overline{\mathcal R}_{\eta}}hF_{\omega'}(t,w)exp_{\eta}(\pi i wz)d\lambda_{\eta}(w)}$\\

$={1\over 2}\int_{w finite}{^{\circ}h}exp_{\eta}(\pi iw({^{\circ}z}))dL(\lambda_{\eta})(w)$\\

$={1\over 2}\int_{\mathcal{R}}exp(-\pi^{2}{^{\circ}t}{^{\circ}w}^{2})exp(\pi i w({^{\circ}z}))d\mu(w)$ $(\sharp\sharp)$\\

$={1\over\sqrt{4\pi^{\circ}t}}exp({-({^{\circ}z})^{2}\over 4{^{\circ}t}})$, (\footnote{\label{transform} Taking standard parts, the fact that;\\

$\int_{\mathcal{R}}e^{i\pi wz-\pi^{2}tw^{2}}dw={1\over\sqrt{\pi t}}e^{-z^{2}\over 4t}$\\

is a standard result, which we include for want of a convenient reference. We have $i\pi wz-\pi^{2}tw^{2}=-\pi^{2}t(w-{iz\over 2\pi t})^{2}-{z^{2}\over 4t}$. Hence;\\

$\int_{\mathcal{R}}e^{i\pi wz-\pi^{2}tw^{2}}dw$\\

$=e^{-z^{2}\over 4t}\int_{\mathcal{R}}e^{-\pi^{2}t(w-{iz\over 2\pi t})^{2}}dw$\\

$=e^{-z^{2}\over 4t}\int_{Im(w')={-z\over 2\pi t}}e^{-\pi^{2}t w'^{2}}dw'$ $(w'=w-{iw\over 2\pi t})$\\

$={e^{-z^{2}\over 4t}\over \pi\sqrt{t}}\int_{Im(w'')={-\pi\sqrt{t}z\over 2\pi t}}e^{-w''^{2}}dw''$   $(w''=\pi\sqrt{t}w')$\\

$={1\over\sqrt{\pi t}}e^{-z^{2}\over 4t}$\\}). Now substituting $x-y$ for $z$, we obtain the result.

\end{proof}

\begin{defn}
\label{classical}
Let $g:\mathcal{R}\rightarrow\mathcal{C}$ be a continuous function, satisfying the growth condition;\\

$|g(x)|\leq Aexp(B|x|^{\rho})$, $(x\in\mathcal{R})$\\

for some constants $A,B$ and $\rho<2$. Then the function $H:\mathcal{T}\times\mathcal{R}\rightarrow\mathcal{C}$, defined by;\\

$H(0,x)=g(x)$\\

$H(t,x)={1\over\sqrt{4\pi t}}\int_{\mathcal{R}}exp({-(x-y)^{2}\over 4t})g(y)d\mu(y)$ $(t>0)$\\

which is continuous, and satisfies the standard heat equation;\\

${\partial H\over\partial t}-{\partial^{2}H\over\partial{x}^{2}}=0$\\

on $\mathcal{T}_{>0}\times{\mathcal{R}}$, (\footnote{\label{evans} A good proof of this fact can be found in \cite{SS}, (Theorem 2.1), if $g\in S(\mathcal{R})$. For the more general case, see \cite{evans}}), is known as the classical solution to the heat equation with boundary condition $g$.
\end{defn}

\begin{theorem}
\label{specsol}
Let $g$ be as in Definition \ref{classical}, let $g_{\eta}$ denote its measurable extension to $\overline{\mathcal{R}_{\eta}}$, and, let $g_{\eta,\omega}$ be the truncation of $g_{\eta}$, given by;\\

$g_{\eta,\omega}=g_{\eta}\chi_{[-\omega,\omega)}$\\

for a nonstandard step function $\chi_{[-\omega,\omega)}$, with $\omega\in{{^{*}\mathcal{N}}\setminus\mathcal{N}}$, $\eta-\omega$ infinite and $\omega<\omega'^{1\over 2}$. Then, with $f$ determined by Lemma \ref{nsheat}, for $g_{\eta,\omega}$ as the boundary condition, we have;\\

${^{\circ}(\check{F_{\omega'}}*f)}|_{st^{-1}(\mathcal{T}_{>0}\times\mathcal{R})}=st^{*}(H_{\infty})$\\

 if $\omega'<log(\eta)^{1\over 2}$, and $H_{\infty}$ is obtained from the classical solution $H$ of the heat equation, with boundary condition $g$, given in Definition \ref{classical}.

\end{theorem}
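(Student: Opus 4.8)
The plan is to reduce the claim, through Lemma~\ref{sol}, to an explicit integral against the nonstandard heat kernel of Definition~\ref{nsheatker}, and then to pass the standard part through that integral. First I would verify that the truncated datum $g_{\eta,\omega}=g_\eta\chi_{[-\omega,\omega)}$ satisfies the extra hypotheses of Theorem~\ref{nstransfsol}, and hence of Lemma~\ref{sol}: because $\omega<\omega'^{1/2}<\log(\eta)^{1/4}$ is negligible in scale compared with $\eta$, the support $[-\omega,\omega)$ is disjoint both from $\frac{\eta^2-1}{\eta}$ and from an infinite window $[-\eta,-\eta+\omega_0)$ at the left end, so $g_{\eta,\omega}$ vanishes there as Theorem~\ref{nstransfsol} requires. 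Lemma~\ref{sol} then gives $\check{F_{\omega'}}*f=(hF_{\omega'})\check{}*g_{\eta,\omega}$, which by Definitions~\ref{convolutiondef} and~\ref{nsheatker} reads
\[(\check{F_{\omega'}}*f)(t,x)=\int_{\overline{\mathcal{R}_\eta}}\Psi_{\omega'}\Big(t,\tfrac{[\eta x]}{\eta},y\Big)\,g_{\eta,\omega}(y)\,d\nu_\eta(y),\]
where $\Psi_{\omega'}(t,u,y)=(hF_{\omega'})\check{}(t,u-y)$ and $\nu_\eta$ is the scaled counting measure, which I identify with $\lambda_\eta$.

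I would then fix $(t,x)\in st^{-1}(\mathcal{T}_{>0}\times\mathcal{R})$, so that $t,x$ are finite with ${^\circ t}>0$, and show the integrand above is $S$-integrable in $y$ with standard part concentrated on the finite $y$. The decisive input is a Gaussian decay bound on the kernel in its spatial slot: refining the estimates $(\dag),(\dag\dag),(\dag\dag\dag)$ and footnotes~\ref{roc}--\ref{roc3} of Lemma~\ref{specnsheatker}, one shows that $|(hF_{\omega'})\check{}(t,z)|$ is dominated, out to $|z|\sim\omega$, by the measurable counterpart of a fixed Gaussian $C_t\,{^{*}exp}(-c_t z^2)$. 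Against the sub-Gaussian growth $|g(y)|\le A\,{^{*}exp}(B|y|^\rho)$ with $\rho<2$, the product then decays like ${^{*}exp}(-c_t' y^2)$ for large $|y|$, so it is dominated by a single Gaussian whose counterpart is $S$-integrable (by the argument of Theorem~0.17 of \cite{dep1}, as used in Lemma~\ref{specnsheatker}). Here the window constraints $\omega<\omega'^{1/2}$ and $\omega'<\log(\eta)^{1/2}$ are exactly calibrated so that the Gaussian decay of the kernel survives out to the edge of the truncation and overwhelms the growth of $g$; note the crude uniform bound on the kernel would not suffice, since $\omega\,{^{*}exp}(B\omega^\rho)$ is infinite.

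Granting $S$-integrability, I would invoke the theorem that the standard part commutes with the integral (\cite{dep2}, Theorem~3.24, and \cite{and}, Corollary~5), exactly as in the final display of Lemma~\ref{specnsheatker}. By Lemma~\ref{specnsheatker} the kernel's standard part at finite $y$ is $\frac{1}{\sqrt{4\pi{^\circ t}}}exp(-\frac{({^\circ x}-{^\circ y})^2}{4{^\circ t}})$, while the domination of the previous paragraph forces the infinite $y$ to contribute infinitesimally; continuity of $g$ lets me replace $g_{\eta,\omega}(y)$ by $g({^\circ y})$ on the finite part. This gives
\[{^\circ(\check{F_{\omega'}}*f)(t,x)}=\int_{y\ finite}\frac{1}{\sqrt{4\pi{^\circ t}}}exp\Big(\frac{-({^\circ x}-{^\circ y})^2}{4{^\circ t}}\Big)g({^\circ y})\,dL(\lambda_\eta)(y).\]

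Finally I would convert this Loeb integral into a Lebesgue integral over $\mathcal{R}$ using that $st$ is measure preserving in the space direction (Theorem~0.7 of \cite{dep1}, restated as $(\sharp)$ in the proof of Lemma~\ref{squaremmp}), so that $st_*L(\lambda_\eta)=\mu$ and the integral becomes $\int_{\mathcal{R}}\frac{1}{\sqrt{4\pi{^\circ t}}}exp(-\frac{({^\circ x}-u)^2}{4{^\circ t}})g(u)\,d\mu(u)=H({^\circ t},{^\circ x})$. Since $(t,x)\in st^{-1}(\mathcal{T}_{>0}\times\mathcal{R})$ was arbitrary and $st^{*}(H_\infty)(t,x)=H({^\circ t},{^\circ x})$ there, this is the asserted identity. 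The main obstacle is the kernel decay bound of the second paragraph: the remaining steps are the standard $S$-integrability and standard-part machinery, but it is precisely the Gaussian decay of $(hF_{\omega'})\check{}$ out to $|z|\sim\omega$ that makes the truncated convolution converge to the classical Gaussian integral rather than diverge.
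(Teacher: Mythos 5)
Your overall architecture (reduce via Lemma \ref{sol} to $(hF_{\omega'})\check{}\,*g_{\eta,\omega}$, then take standard parts inside the integral) starts the same way as the paper, and your opening observation that $g_{\eta,\omega}$ does satisfy the vanishing hypotheses of Theorem \ref{nstransfsol} is a point worth making explicit. But the load-bearing step of your argument --- the claim that $|(hF_{\omega'})\check{}(t,z)|$ is dominated out to $|z|\sim\omega$ by a fixed Gaussian $C_t\,{^{*}exp}(-c_tz^2)$ --- is asserted, not proved, and it does not follow from ``refining the estimates $(\dag),(\dag\dag),(\dag\dag\dag)$ and footnotes \ref{roc}--\ref{roc3}''. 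Those estimates control $h(t,w)$ and $t_{\eta}(w)$ as functions of the \emph{frequency} variable $w$; they give $S$-integrability of $hF_{\omega'}$ itself, which is what Lemma \ref{specnsheatker} needs to compute the pointwise standard part of the inverse transform at \emph{finite} $z$. They say nothing about the decay of $(hF_{\omega'})\check{}$ in the \emph{spatial} variable $z$ at infinite $z$. That decay is genuinely delicate: $F_{\omega'}$ is a sharp cutoff, so $(hF_{\omega'})\check{}$ is essentially a Gaussian convolved with a Dirichlet-type kernel, whose tail is governed by the jump of $hF_{\omega'}$ at $\pm\omega'$ (of size roughly ${^{*}exp}(-\pi^{2}t\omega'^{2})$) divided by $|z|$, not by a Gaussian in $z$. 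A bound of the shape you need is plausibly true precisely because $\omega^{2}<\omega'$ makes ${^{*}exp}(-c_t\omega^{2})$ dwarf ${^{*}exp}(-\pi^{2}t\omega'^{2})/\omega$, but establishing it requires a discrete summation-by-parts estimate on the inverse transform that appears nowhere in the material you cite. Since you yourself flag this bound as ``the main obstacle'', the proposal as written has a genuine gap at its center.

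The paper avoids this obstacle by a different decomposition: instead of bounding the kernel pointwise in $z$, it approximates it \emph{uniformly in $z$} through the chain $hF_{\omega'}\rightsquigarrow\theta=exp_{\eta}(-\pi^{2}tw^{2})F_{\omega'}\rightsquigarrow\tfrac{1}{2}\tau\rightsquigarrow\Gamma(t,z)=\tfrac{1}{\sqrt{4\pi t}}exp_{\eta}(-z^{2}/4t)$, with quantitative errors $\delta'\omega'$, $\delta''$, and $\omega'\,{^{*}exp}(-\pi^{2}t(\omega'-1)^{2})$ respectively (footnotes \ref{roc4}, \ref{roc5}, \ref{roc6}). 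Each error, being uniform in $z$, is then multiplied by $|g_{\eta,\omega}|\leq A\,{^{*}exp}(B\omega^{\rho})$ and integrated over $[-\omega,\omega)$; the conditions $(B)$--$(F)$, i.e.\ $\omega<\omega'^{1/2}$ and $\omega'<log(\eta)^{1/2}$, are exactly what make these products infinitesimal. Only at the last stage, for the \emph{exact} Gaussian kernel $\Gamma$, is an $S$-integrability and standard-part argument of the kind you describe carried out --- and there the Gaussian decay in $z$ is immediate by transfer from the classical tail estimate $|\Psi(t,x-y)g(y)|\leq 1/y^{2}$. If you want to salvage your more direct route, you would need to actually prove the spatial decay of $(hF_{\omega'})\check{}$; otherwise the correct move is to insert the paper's intermediate comparisons so that the only kernel whose spatial decay you ever need is the explicit Gaussian $\Gamma$.
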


\begin{proof}

Using the following footnote \ref{roc4}, we obtain, by transfer and the measurability observation at the end of Lemma \ref{specnsheatker}, that, for any given $\delta',t\in{^{*}\mathcal{R}}_{>0},x\in{^{*}\mathcal{R}}$;\\

$|h(t,x)-exp_{\eta}(-\pi^{2}tx^{2})|<\delta'$, $(*)$\\

if $\eta>C_{4}(x,\delta',t)$. In particular, observing that the function $C_{4}:{^{*}\mathcal{R}}\times{^{*}\mathcal{R}}_{>0}^{2}$ is increasing in $x$ and $t$, we have, for a given infinite $\omega'\in{^{*}\mathcal{N}}$, that $(*)$ holds for all $|x|\leq \omega'$, and finite $t$, if $\eta>C_{4}(\omega',\delta',\omega')$,(\footnote{\label{roc4} Taking a principal branch of the logarithm, we have, for $w,w'\in\mathcal{C}$, with $w\neq 0$ and $|w'-w|<{|w|\over 2}$, that the function $\theta(t)=log(w+t(w'-w))$ is continuously differentiable on the interval $[0,1]$, with;\\

$\theta'(t)={w'-w\over w+t(w'-w)}$\\

Applying the mean value theorem to the real and imaginary parts of $f$, we obtain;\\

$|log(w)-log(w')|=|\theta(1)-\theta(0)|\leq 2{|w-w'|\over min_{t\in [0,1]}|w+t(w'-w)|}\leq 4{|w-w'|\over |w|}$ $(*)$\\

Using footnote \ref{roc2}, we have, for $w,w'\in\mathcal{C}$, with $|w-w'|<1$ and $Re(w)\leq 0$, that;\\

$|exp(w)-exp(w')|=|exp(w)||exp(w'-w)-1|\leq |w'-w|exp(|w'-w|)|exp(w)|\leq e|w-w'|$ $(**)$\\

Now, for $t\in\mathcal{R}$, $t>0$, we can satisfy the condition $|tlog(w')-tlog(w)|<1$, using $(*)$ and assuming that $|w'-w|<{|w|\over 2}$, by taking $|w'-w|<{|w|\over 4t}$. Then, assuming, that $|w|\leq 1$, so that $Re(tlog(w))\leq 0$, and $w\neq 0$, we have, combining $(*),(**)$, that;\\

$|w'^{t}-w^{t}|=|exp(tlog(w'))-exp(tlog(w))|<4et{|w'-w|\over |w|}$\\

if $|w'-w|<min({|w|\over 2},{|w|\over 4t})$, $(\dag)$. We now estimate the rate of convergence of the sequence $v_{n}(y)=t_{n}(y)^{t}-exp(-\pi^{2}ty^{2})$, for $t\in\mathcal{R}_{>0}$. Let $C(\delta,y)$ be the constant obtained in footnote \ref{roc3}, so that there $(***)$ holds. Then, it is easy to see, using $(\dag)$ and the fact that $0<exp(-\pi^{2}y^{2})\leq 1$, that, if, $n>max(C({exp(-\pi^{2}y^{2})\over 4},y),C({exp(-\pi^{2}y^{2})\over 8t},y),C(\delta'{exp(-\pi^{2}y^{2})\over 8et},y))$, then;\\

$|v_{n}(y)|=|t_{n}(y)^{t}-exp(-\pi^{2}ty^{2})|<\delta'$ $(\dag\dag)$\\

In particular, substituting into the expression for $C(\delta,y)$, we can find constants $C_{2},C_{3}\in\mathcal{R}$, such that $(\dag\dag)$ holds, for;\\

$n>max(C_{2},{C_{3}y^{2}exp(\pi^{2}y^{2})t\over\delta'})=C_{4}(y,\delta',t)$\\
}).\\

In particular, we obtain that;\\

$|(hF_{\omega'})\check{}(t,z)-{\check\theta}(t,z)|\leq\int_{\overline{\mathcal{R}}_{\eta}}|hF_{\omega'}(t,w)-\theta(t,w)|d\lambda(w)$\\

$\leq 2\centerdot{1\over 2}\delta'\omega'=\delta'\omega'$ $(**)$\\

for all $z\in{\overline{\mathcal{R}}_{\eta}}$ and finite $t\in{\overline{\mathcal{T}}_{\eta}}$, $t\neq 0$, where $\theta(t,w)=exp_{\eta}(-\pi^{2}tw^{2})F_{\omega'}(t,w)$. Now, substituting $x-y$ for $z$ in $(**)$, and multiplying through by $g_{\eta,\omega}(y)$, we have, from $(**)$, that;\\

$|(hF_{\omega'})\check{}(t,x-y)g_{\eta,\omega}(y)|\leq |{\check\theta}(t,x-y)g_{\eta,\omega}(y)|+\delta'\omega'|g_{\eta,\omega}(y)|$ $(***)$\\

for all $x,y\in{\overline{\mathcal{R}}_{\eta}}$ and $t$ as above. Now using the growth condition in Definition \ref{classical}, we have that $|\delta'\omega'g_{\eta,\omega}|\leq {\chi_{[-\omega,\omega)}\over \omega^{2}}$, if $\delta'\leq {{^{*}exp}(-B|\omega|^{\rho})\over A\omega^{2}\omega'}$. Using \cite{dep2}(Theorem 3.24), the fact that $\int_{\overline{\mathcal{R}}_{\eta}}{\chi_{[-\omega,\omega)}\over \omega^{2}}d\lambda={2\over\omega}\simeq 0$, and \cite{and}, we have $\delta'\omega'|g_{\eta,\omega}|$ is $S$-integrable, and $\int_{\overline{\mathcal{R}_{\eta}}}\delta'\omega'|g_{\eta,\omega}|d\lambda\simeq 0$. In particular, using Definition \ref{convolutiondef} and Lemma \ref{sol}, this implies that;\\

$(\check{F_{\omega'}}*f)(t,x)=(hF_{\omega'})\check{}*g_{\eta,\omega}(t,x)\simeq\check{\theta}*g_{\eta,\omega}(t,x)$ $(****)$\\

for all finite $t\in\overline{\mathcal{R}}_{\eta,>0}$ and $x\in\mathcal{R}_{\eta}$. Let $\tau(t,w)=exp_{\eta}(-\pi^{2}tw^{2})$, then, using the following footnote \ref{roc5}, we obtain by transfer;\\

$|\check{\tau}(t,z)-{1\over\sqrt{\pi t}}exp_{\eta}({-z^{2}\over 4t})|\leq {K(t)\over\eta}+G(t){|z|\over\eta}+H{z^{2}\over\eta}$\\

for $t\in\overline{\mathcal{R}}_{\eta,>0}$ and $z\in\overline{\mathcal{R}}_{\eta}$,(\footnote{\label{roc5} We require the following estimate, see \cite{dep1} for relevant terminology. Let $f:\mathcal{R}\rightarrow\mathcal{R}$ be differentiable on $\mathcal{R}$ and increasing (decreasing) $(*)$ on the interval $[{i\over n},{j\over n}]$, where $i,j\in\mathcal{Z}$, $-n^{2}\leq i<j\leq n^{2}$, and $n\in\mathcal{N}$, then;\\

$|\int_{[{i\over n},{j\over n}]}f_{n}d\lambda_{n}-\int_{[{i\over n},{j\over n}]}f d\mu|$\\

$\leq{1\over n}\sum_{k=0}^{j-i-1}|f({i+k+1\over n})-f({i+k\over n})|$ $(**)$\\

$={1\over n}\sum_{k=0}^{j-i-1}|\int_{i+k\over n}^{i+k+1\over n}f' d\mu|$ $(***)$\\

$\leq{1\over n}\sum_{k=0}^{j-i-1}\int_{i+k\over n}^{i+k+1\over n}|f'| d\mu$\\

$={1\over n}\int_{i\over n}^{j\over n}|f'|d\mu$ $(****)$\\

where, in $(**)$, we have used the assumption $(*)$ and the definition of the relevant integrals, and, in $(***)$, we have used the Fundamental Theorem of Calculus. Now let $Y(x)=exp(-\pi^{2}tx^{2})cos(\pi xz)$ and let $Y_{n}(x)$ be its $\lambda_{n}(x)$ measurable counterpart on $\mathcal{R}_{n}$, where $t\in\mathcal{R}_{>0}$ and $z={j\over n}$, $0<j\leq n^{2}-1$. Observe that the zeros of $Y$ on $[0,n]$ are located at the points $p_{k}={(2k-1)n\over 2j}$, for $k\in\mathcal{N}\cap [0,j]$, and, the local maxima (minima) of $Y$ on $[0,n]$, are located at points $q_{k}$, where $p_{k}<q_{k}<p_{k+1}$, for $0\leq k\leq j-1$, and $p_{j}<q_{j}<n$, for $n\geq D$, some $D\in\mathcal{R}$. Let $p_{k}'$ denote the points ${[np_{k}]\over n}$, $p_{k}''$ the points $p_{k}'+{1\over n}$, and, similarly, define $q_{k}',q_{k}''$, then, it is easy to see (check this) that we can choose a constant $D(t)$, such that $0<p_{k}'<p_{k}''<q_{k}'<q_{k}''<p_{k+1}'<n$, for $0\leq k\leq j-1$, and $p_{j}'<q_{j}'<q_{j}''<n$, for $n\geq max(D(t),\sqrt{2j})$. Now, using $(****)$, and the fact that $Y$ is monotone on the intervals $[p_{k}'',q_{k}']$, $[q_{k}'',p_{k+1}']$, for $0\leq k\leq j-1$, and on $[0,p_{0}'],[p_{j}'',q_{j}'],[q_{j}'',n]$, we obtain;\\

$|\int_{[p_{k}'',q_{k}')}Y_{n} d\lambda_{n}-\int_{[p_{k}'',q_{k}')}Y d\mu|\leq{1\over n}\int_{[p_{k}'',q_{k}')}|Y'|d\mu$ $(\dag)$\\

and, similarly, for the other intervals. Choose a constant $A(t)\in\mathcal{R}$, such that $|Y(x)|\leq{1\over x^{2}}$, $(\sharp)$, for $|x|>A(t)$. Let $k_{max}$ be the largest $k$ such that $p_{k}''\leq A(t)$, then ${(2k_{max}-1)n\over 2j}+{1\over n}\leq A(t)$ and $k_{max}\leq {jC(t)\over n}+1$. Let $U=\bigcup_{0\leq k\leq k_{max}}[p_{k}',p_{k}'')\cup [q_{k}',q_{k}'')$, then, using the bound $|Y|\leq 1$;\\

$|\int_{U}Y_{n}d\lambda_{n}|\leq{1\over n}2k_{max}\leq{2jC(t)\over n^{2}}+{2\over n}$ $(\dag\dag)$\\

and, similarly, for $|\int_{U}Y d\mu|$. Let $V=\bigcup_{k_{max}<k\leq j}[p_{k}',p_{k}'')\cup [q_{k}',q_{k}'')$, then, using the bound $(\sharp)$;\\

$|\int_{V}Y_{n}d\lambda_{n}|\leq{2\over n}\sum_{k_{max}<k\leq j}{1\over({(2k-1)n\over 2j})^{2}}\leq 16{j^{2}\over n^{3}}$ $(\dag\dag\dag)$\\

and, similarly, for $|\int_{V}Y d\mu|$. Let $W=(\bigcup_{0\leq k\leq j-1}[p_{k}'',q_{k}')\cup[q_{k}'',p_{k+1}'))\cup[0,p_{0}']\cup[p_{j}'',q_{j}']\cup[q_{j}'',n]$. Then, using $(\dag)$, we have;\\

$|\int_{W}Y_{n} d\lambda_{n}-\int_{W}Y d\mu|\leq{1\over n}\int_{W}|Y'|d\mu\leq {1\over n}\int_{[0,n)}|Y'|d\mu$ $(\dag\dag\dag\dag)$\\

Using $(\dag\dag),(\dag\dag\dag),(\dag\dag\dag\dag)$, and the fact that $U,V,W$ is a partition of $[0,n)$, we obtain;\\

$|\int_{[0,n)}Y_{n} d\lambda_{n}-\int_{[0,n)}Y d\mu|\leq {1\over n}\int_{[0,n)}|Y'|d\mu+{4jC(t)\over n^{2}}+{4\over n}+32{j^{2}\over n^{3}}$ $(\sharp\sharp)$\\

Then, as $Y$ is even, $|Y|\leq 1$, $|Y'|\leq exp(-\pi^{2}tx^{2})(2\pi^{2}t|x|+{\pi j\over n})$, we obtain, using $(\sharp\sharp)$;\\

$|\int_{\mathcal{R}_{n}}Y_{n} d\lambda_{n}-\int_{[-n,n]}Y d\mu|$\\

$\leq 2|\int_{[0,n)}Y_{n} d\lambda_{n}-\int_{[0,n)}Y d\mu|+{1\over n}(|Y(-n)|+|Y(0)|)$\\

$\leq {1\over n}\int_{\mathcal{R}}|Y'|d\mu+{8jC(t)\over n^{2}}+{8\over n}+64{j^{2}\over n^{3}}+{2\over n}$\\

$\leq{D(t)\pi j\over n^{2}}+{2E(t)\pi^{2}t\over n}+{8jC(t)\over n^{2}}+{10\over n}+64{j^{2}\over n^{3}}={F(t)\over n}+G(t){j\over n^{2}}+H{j^{2}\over n^{3}}$ $(\sharp\sharp\sharp)$\\

where $F(t),G(t),H\in\mathcal{R}$. Choosing a constant $I(t)\in\mathcal{R}$ such that $exp(-\pi^{2}tx^{2})\leq{I(t)\over 2x^{2}}$, for $|x|>1$, we obtain;\\

$|\int_{\mathcal{R}_{n}}Y_{n} d\lambda_{n}-\int_{\mathcal{R}}Y d\mu|$\\

$\leq {F(t)\over n}+G(t){j\over n^{2}}+H{j^{2}\over n^{3}}+{I(t)\over n}={J(t)\over n}+G(t){j\over n^{2}}+H{j^{2}\over n^{3}}$ $(\sharp\sharp\sharp\sharp)$\\

Let $Z(x)=exp(-\pi^{2}tx^{2})sin(\pi xz)$, with hypotheses and $Z_{n}(x)$ as above. Then, as $Z$ is odd, $|Z|\leq 1$, we have $\int_{\mathcal{R}}Z d\mu=0$, $\int_{\mathcal{R}_{n}}Z_{n} d\lambda_{n}={Z(-n)\over n}$, and;\\

$|\int_{\mathcal{R}_{n}}Z_{n} d\lambda_{n}-\int_{\mathcal{R}}Z d\mu|\leq {1\over n}$ $(\sharp\sharp\sharp\sharp\sharp)$\\

Let $X(x)=exp(-\pi^{2}tx^{2})exp(i\pi xz)$, with hypotheses and $X_{n}(x)$ as above. Then, using the estimates $(\sharp\sharp\sharp\sharp)$, $(\sharp\sharp\sharp\sharp\sharp)$ and footnote \ref{transform}, we obtain;\\

$|\int_{\mathcal{R}_{n}}X_{n} d\lambda_{n}-{1\over\sqrt{\pi t}}e^{-z^{2}\over 4t}|\leq {K(t)\over n}+G(t){j\over n^{2}}+H{j^{2}\over n^{3}}$ $(\sharp\sharp\sharp\sharp\sharp\sharp)$\\

where $K(t)=J(t)+1$ and $z={j\over n}$.}). In particular, if $\omega''\in{^{*}\mathcal{N}}$ is infinite, $\delta''\in{^{*}\mathcal{R}}_{>0}$, then we obtain;\\

$|{1\over 2}\check{\tau}(t,z)-{1\over\sqrt{4\pi t}}exp_{\eta}({-z^{2}\over 4t})|<\delta''$ $(\dag)$\\

for all finite $t\in{\overline{\mathcal{R}}_{\eta}}_{>0}$, and $z\in{\overline{\mathcal{R}}_{\eta}}$, with $|z|\leq\omega''$, if $\eta>{3\omega''^{3}\over\delta''}$, $(\dag\dag)$. Using Definition \ref{nstransf}, and transfer of the following footnote \ref{roc6}, we have;\\

$|{1\over 2}\check{\tau}(t,z)-\check{\theta}(t,z)|\leq {1\over 2}\int_{|x|\geq\omega'}exp_{\eta}(-\pi^{2}tx^{2})d\lambda_{\eta}(x)$\\

$\leq{1\over 2\pi\sqrt{t}}{^{*}exp}(-\pi^{2}t(\omega'-{1\over\eta})^{2})$\\

$\leq\omega'{^{*}exp}(-\pi^{2}t(\omega'-1)^{2})$ $(\dag\dag\dag)$\\

for all $z\in\overline{\mathcal{R}_{\eta}}$, finite $t\in\overline{\mathcal{R}_{\eta}}_{>0}$, $\omega'\in{^{*}\mathcal{N}}$ infinite,(\footnote{\label{roc6} We make the following estimate, with $t\in\mathcal{R}_{>0}$;\\

$\int_{|x|\geq{j\over n},x\in\mathcal{R}_{n}}exp_{n}(-\pi^{2}tx^{2})d\lambda_{n}(x)$\\

$\leq {2\over n}\sum_{k=j}^{n^{2}}exp(-\pi^{2}t({k\over n})^{2})$\\

$\leq 2\int_{{j-1\over n}}^{n}exp(-\pi^{2}tx^{2}d\mu(x)$\\

$\leq 2\int_{{j-1\over n}}^{\infty}exp(-\pi^{2}tx^{2})d\mu(x)$\\

$=2\int_{\pi^{2}t({j-1\over n})^{2}}^{\infty}{exp(-u)\over 2\pi\sqrt{tu}}d\mu(u)$, $(u=\pi^{2}tx^{2})$\\

$\leq {1\over \pi\sqrt{t}}\int_{\pi^{2}t({j-1\over n})^{2}}^{\infty}exp(-u)d\mu(u)$, $({j-1\over n}\geq {1\over\pi\sqrt{t}})$\\

$\leq {1\over\pi\sqrt{t}}exp(-\pi^{2}t({j-1\over n})^{2})$\\}).\\

Combining $(\dag)$ and $(\dag\dag\dag)$, gives;\\

$|\check{\theta}(t,z)-\Gamma(t,z)|\leq \delta''+\omega'{^{*}exp}(-\pi^{2}t(\omega'-1)^{2})$ $(\dag\dag\dag\dag)$\\

for all finite $t\in{\overline{\mathcal{R}}_{\eta}}_{>0}$, and $|z|\leq\omega''$, $z\in{\overline{\mathcal{R}}_{\eta}}$, if the condition $(\dag\dag)$ holds, where $\Gamma(t,z)={1\over\sqrt{4\pi t}}exp_{\eta}({-z^{2}\over 4t})$. We have, using Definition \ref{convolutiondef} and $(\dag\dag\dag\dag)$;\\

$|(\check{\theta}*g_{\eta,\omega})(t,x)-(\Gamma*g_{\eta,\omega})(t,x)|$\\

$=|\int_{{\overline{\mathcal{R}}_{\eta}}}(\check{\theta}-\Gamma)(x-y)g_{\eta,\omega}(y)d\lambda_{\eta}(y)|$\\

$\leq\int_{{\overline{\mathcal{R}}_{\eta}}}(\delta''+\omega'{^{*}exp}(-\pi^{2}t(\omega'-1)^{2}))|g_{\eta,\omega}(y)|d\lambda_{\eta}(y)$ $(\sharp)$\\

for finite $t\in{\overline{\mathcal{R}}_{\eta}}_{>0}$, finite $x\in{\overline{\mathcal{R}}_{\eta}}$, if $\omega''=2\omega$, that is, from $(\dag\dag)$, $\eta>{24\omega^{3}\over\delta''}$, $(\sharp\sharp)$. Following the same argument as above, we have $\delta''g_{\eta,\omega}$ is $S$-integrable and $|\delta''g_{\eta,\omega}|\leq{\chi_{[-\omega,\omega)}\over\omega^{2}}$, if $\delta''\leq {{^{*}}exp(-B\omega^{\rho})\over\omega^{2}}$, so we require, from $(\sharp\sharp)$, that $\eta>24\omega^{5}{^{*}exp}(B\omega^{\rho})$, $(\sharp\sharp\sharp)$. Similarly, we have $\omega'{^{*}exp}(-\pi^{2}t(\omega'-1)^{2}))g_{\eta,\omega}$ is $S$-integrable and $|\omega'{^{*}exp}(-\pi^{2}t(\omega'-1)^{2}))g_{\eta,\omega}|\leq{\chi_{[-\omega,\omega)}\over\omega^{2}}$, if ${^{*}exp}(-\pi^{2}t(\omega'-1)^{2}+1)\leq {{^{*}}exp(-B\omega^{\rho})\over\omega^{2}}$. By a simple calculation, this can be achieved if $\omega'\geq Cmax(log(\omega)^{1\over 2},\omega^{\rho+1\over 2})$, $(\sharp\sharp\sharp\sharp)$. If both the conditions $(\sharp\sharp\sharp)$ and $(\sharp\sharp\sharp\sharp)$ are satisfied, we then have;\\

$(\check{\theta}*g_{\eta,\omega})(t,x)\simeq (\Gamma*g_{\eta,\omega})(t,x)$ $(\sharp\sharp\sharp\sharp\sharp)$\\

for finite $t\in{\overline{\mathcal{R}}_{\eta}}_{>0}$, finite $x\in{\overline{\mathcal{R}}_{\eta}}$. Finally, using Definition \ref{convolutiondef}, we have;\\

$\Gamma*g_{\eta,\omega}(t,x)=\int_{{\overline{\mathcal{R}}_{\eta}}}\Gamma(t,x-y)g_{\eta,\omega}(y)d\lambda_{\eta}(y)$ $(!)$\\

By the growth condition on $g$, for $x\in\mathcal{R}$, $t\in\mathcal{R}_{>0}$, if $\Psi(t,x-y)$ denotes the standard heat kernel, the function $\Psi(t,x-y)g(y):\mathcal{R}\rightarrow\mathcal{C}$ is continuous and satisfies the tail estimate $|\Psi(t,x-y)g(y)|\leq {1\over y^{2}}$ for sufficiently large $|y|\geq A(t)$, $A(t)\in\mathcal{R}$. Using the proof of Theorem 0.17 in \cite{dep1} and Theorem 3.24 of \cite{dep2}, we obtain that $\Gamma(t,x-y)g_{\eta,\omega}(y)$ is $S$-integrable and ${^{\circ}(\Gamma*g_{\eta,\omega})}(t,x)=H(t,x)$, $(!!)$. For finite $x\in\overline{\mathcal{R}_{\eta}}$, finite $t\in\overline{\mathcal{R}_{\eta,>0}}$, and ${^{\circ}t}>0$, we have that $\Gamma(t,x-y)g_{\eta,\omega}(y)$ is $S$-integrable, $(!!!)$. In order to see this, choose $0<t_{1}<t<t_{2}$, with $t_{1},t_{2}\in\mathcal{R}$, and $x_{1}<x<x_{2}$, with $x_{1},x_{2}\in\mathcal{R}$. We then have;\\

$\Gamma(t,x-y)|g_{\eta,\omega}(y)|\leq \sqrt{t_{1}\over t_{2}}\Gamma(t_{2},x_{1}-y)|g_{\eta,\omega}(y)|$, for $y\leq x_{1}$\\

$\Gamma(t,x-y)|g_{\eta,\omega}(y)|\leq \sqrt{t_{2}\over t_{1}}\Gamma(t_{2},x_{2}-y)|g_{\eta,\omega}(y)|$, for $y\geq x_{2}$\\

$\Gamma(t,x-y)|g_{\eta,\omega}(y)|\leq C(t)$, for $x_{1}\leq y\leq x_{2}$ $(!!!!)$\\

where $C(t)\in\mathcal{R}$, and in $(!!!!)$, we have used the fact that $g$ is continuous. Now applying the result of $(!!)$ and using \cite{and} (Corollary 5), we obtain $(!!!)$. Then, again using Theorem 3.24 of \cite{dep2}, we have that ${^{\circ}(\Gamma*g_{\eta,\omega})}(t,x)=H({^{\circ}t},{^{\circ}x})$, $(!!!!!)$. Combining $(****)$,$(\sharp\sharp\sharp\sharp\sharp)$ and $(!!!!!)$, we obtain that;\\

${^{\circ}(\check{F_{\omega'}}*f)}(t,x)=H({^{\circ}t},{^{\circ}x})$ $(A)$\\

for finite $x\in\overline{\mathcal{R}_{\eta}}$, finite $t\in\overline{\mathcal{R}_{\eta}}_{>0}$, under the conditions;\\

$\eta>max(C_{4}(\omega',{{^{*}exp}(-B|\omega|^{\rho})\over \omega^{2}\omega'},\omega'),25\omega^{5}{^{*}exp}(B|\omega|^{\rho}))$ $(B)$\\

$\omega'>C max({^{*}log}(\omega)^{1\over 2},\omega^{\rho+1\over 2})$ $(C)$\\

By a simple calculation, we can satisfy $(B),(C)$ if;\\

$\eta>\omega^{5}\omega'^{4}{^{*}}exp(B\omega^{\rho})$, $\omega'>\omega^{2}$ $(D)$\\

and $(D)$ if;\\

$\eta>\omega'^{6}exp(B\omega')$, $\omega'>\omega^{2}$ $(E)$\\

Therefore, it is sufficient to have;\\

$\omega'<log(\eta)^{1\over 2}$, $\omega<\omega'^{1\over 2}$ $(F)$.\\

Using $(A)$ and condition $(F)$, we obtain the result.

\end{proof}

\begin{theorem}
\label{derivs}

Let $g$ be as in Definition \ref{classical}, let $g_{\eta}$ denote its measurable extension to $\overline{\mathcal{R}_{\eta}}$, and, let $g_{\eta,\omega}$ be the truncation of $g_{\eta}$, given by;\\

$g_{\eta,\omega}=g_{\eta}\chi_{[-\omega,\omega)}$\\

for a nonstandard step function $\chi_{[-\omega,\omega)}$, with $\omega\in{{^{*}\mathcal{N}}\setminus\mathcal{N}}$, $\eta-\omega$ infinite and $\omega<\omega'^{1\over 2}$. Then, with $\hat{f}$ determined by Theorem \ref{nstransfsol}, with $g_{\eta,\omega}$ as the boundary condition, we have;\\

${^{\circ}(\ {}^{\check{}}({F_{\omega'}\hat{f}}))}|_{st^{-1}(\mathcal{T}_{>0}\times\mathcal{R})}=st^{*}(H_{\infty})$\\

if $\omega'<log(\eta)^{1\over 2}$, and $H_{\infty}$ is obtained from the classical solution $H$ of the heat equation, with boundary condition $g$, given in Definition \ref{classical}.

 \end{theorem}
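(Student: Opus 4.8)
The plan is to reduce Theorem~\ref{derivs} entirely to the already-proved Theorem~\ref{specsol}, by establishing the \emph{exact} identity of measurable functions ${}^{\check{}}(F_{\omega'}\hat{f})=\check{F_{\omega'}}*f$ on $\overline{\mathcal{T}_{\eta}}\times\overline{\mathcal{R}_{\eta}}$. This is a purely formal consequence of the Nonstandard Convolution Theorem (Theorem~\ref{convolution}) read ``backwards'' together with the two nonstandard inversion identities of Remark~\ref{rmknstransf}; it holds on the nose, without any infinitesimal error, so that once it is in hand the conclusion follows instantly from the identical conclusion of Theorem~\ref{specsol}, whose hypotheses on $g$, $g_{\eta}$, $g_{\eta,\omega}$, $\omega$, $\omega'$, $\eta$ and the bound $\omega'<log(\eta)^{1\over 2}$ are word-for-word the same. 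Note that the underlying $f$ is the same object in both theorems: Theorem~\ref{nstransfsol} merely records $\hat{f}$ for the $f$ produced by Lemma~\ref{nsheat} with boundary datum $g_{\eta,\omega}$, and $f={1\over 2}\check{\hat{f}}$ by inversion.

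First I would use $\hat{\check{F_{\omega'}}}=2F_{\omega'}$ (the identity $\hat{\check{h}}=2h$ of Remark~\ref{rmknstransf}) to write the multiplier as $F_{\omega'}={1\over 2}\hat{\check{F_{\omega'}}}$. Applying Theorem~\ref{convolution} in the space variable, for each fixed $t$, to the measurable pair $\check{F_{\omega'}}$ and $f$ gives $\hat{(\check{F_{\omega'}}*f)}=\hat{\check{F_{\omega'}}}\,\hat{f}=2F_{\omega'}\hat{f}$, hence $F_{\omega'}\hat{f}={1\over 2}\hat{(\check{F_{\omega'}}*f)}$. Applying the inverse transform, using its linearity and the companion identity $\check{\hat{h}}=2h$ with $h=\check{F_{\omega'}}*f$, yields ${}^{\check{}}(F_{\omega'}\hat{f})={1\over 2}\check{\hat{(\check{F_{\omega'}}*f)}}=\check{F_{\omega'}}*f$, which is the desired identity. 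All functions in play are measurable (by Remark~\ref{rmknstransf}, Lemma~\ref{nsheat}, and Definition~\ref{truncation}), so that both Theorem~\ref{convolution} and the inversion identities genuinely apply; the fact that $F_{\omega'}$ is constant in the time variable causes no trouble, since the transforms and the convolution act only in the space variable.

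With ${}^{\check{}}(F_{\omega'}\hat{f})=\check{F_{\omega'}}*f$ established, I would simply invoke Theorem~\ref{specsol} to conclude ${}^{\circ}({}^{\check{}}(F_{\omega'}\hat{f}))|_{st^{-1}(\mathcal{T}_{>0}\times\mathcal{R})}={}^{\circ}(\check{F_{\omega'}}*f)|_{st^{-1}(\mathcal{T}_{>0}\times\mathcal{R})}=st^{*}(H_{\infty})$, which is the assertion. The only point demanding genuine care, and hence the main obstacle, is the bookkeeping of the two factors of $2$ coming from the two uses of Remark~\ref{rmknstransf}: the ${1\over 2}$ introduced in $F_{\omega'}={1\over 2}\hat{\check{F_{\omega'}}}$ must cancel precisely against the $2$ produced by $\check{\hat{h}}=2h$, and one must verify that the convolution theorem is being applied to the correctly transformed pair, so that the single surviving convolution is exactly $\check{F_{\omega'}}*f$ and not a rescaling of it. Beyond this, the argument needs none of the delicate $S$-integrability estimates carried out in the footnotes of Theorem~\ref{specsol}, since those are already absorbed into the statement we are quoting.
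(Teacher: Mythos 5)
Your proposal is correct and follows essentially the same route as the paper: both establish the exact identity ${}^{\check{}}(F_{\omega'}\hat{f})=\check{F_{\omega'}}*f$ by combining Theorem \ref{convolution} with the inversion identities of Remark \ref{rmknstransf} (tracking the cancelling factors of $2$), and then quote Theorem \ref{specsol} verbatim. The only cosmetic difference is that the paper first rewrites $f=\tfrac{1}{2}\check{\hat{f}}$ and uses the general identity $\check{a}*\check{b}={}^{\check{}}(2ab)$, whereas you apply the hat directly to $\check{F_{\omega'}}*f$ and then invert.
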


\begin{proof}

With notation as above, we have that;\\

$\check{F_{\omega'}}*f={1\over 2}(\check{F_{\omega'}}*(\check{\hat{f}}))$ (by Theorem \ref{rmknstransf})\\

$\check{F_{\omega'}}*(\check{\hat{f}})=^{\check{}}({2F_{\omega'}\hat{f}})$\\

as, by Theorem \ref{rmknstransf} and Remarks \ref{convolutiondef}, for $a,b: \overline{\mathcal{T}}_{\eta}\times\overline{\mathcal{R}}_{\eta}\rightarrow{^{*}\mathcal{C}}$, ${^{\hat{}}}(\check{a}*\check{b})=\hat{\check{a}}\hat{\check{b}}=2a.2b=4ab$, $(*)$, and, $2(\check{a}*\check{b})={^{\check{}}}\ {^{\hat{}}}(\check{a}*\check{b})={^{\check{}}}(4ab)$, by $(*)$ and Theorem \ref{rmknstransf}. Therefore;\\

$\check{F_{\omega'}}*f={}^{\check{}}(F_{\omega'}\hat{f})$\\

and the result follows by Theorem \ref{specsol}. 

\end{proof}

\begin{rmk}
\label{timescales}
Theorem \ref{derivs} gives a solution to the heat equation, obtained by the following steps;\\

(i). Truncating the transfer of the boundary data.\\

(ii). Taking the nonstandard Fourier transform of this data and solving the resulting ODE in Theorem \ref{nstransfsol}.\\

(iii). Truncating the solution again.\\

(iv). Taking the inverse nonstandard Fourier transform.\\

(v). Specialising.\\

By straightforward results on limits in nonstandard analysis, see Theorem 2.22 of \cite{dep2}, it follows that the above algorithm converges for $\{m,n,n'\}$, with $n<(n')^{1\over 2}$, $n'<log(m)^{1\over 2}$, (replacing $\{\eta,\omega,\omega'\}$ respectively), as $m\rightarrow\infty$ (noting that, for $\eta$ infinite, $\eta-log(\eta)^{1\over 4}$ is infinite). It seems likely that the algorithm is faster than current methods involving a recursion over both the space and time steps. However, this still has to be decided computationally.

\end{rmk}

\end{document}